%
%
%


\documentclass{amsart}





\usepackage{amsmath,amsfonts,amssymb,amscd,verbatim,latexsym}

\usepackage{enumerate}
\usepackage{dsfont}
\usepackage{array}
\usepackage{color}
\usepackage{bbold}
\usepackage{bbm}
\usepackage{graphicx}
\usepackage[colorinlistoftodos]{todonotes}
\usepackage[colorlinks=true, allcolors=blue]{hyperref}

\renewcommand{\i}{\mathbbm{i}}
\newcommand{\mO}{\mathcal{O}}
\newcommand{\kk}{\Bbbk}

\newtheorem{theorem}{Theorem}[section]
\newtheorem{lemma}[theorem]{Lemma}
\newtheorem{proposition}[theorem]{Proposition}

\theoremstyle{definition}

\theoremstyle{remark}

\newtheorem{question}[theorem]{Question}

\numberwithin{equation}{section}

\begin{document}

\title{On the Noether bound for noncommutative rings}


\author{Luigi Ferraro}
\address{Wake Forest University, Department of Mathematics and Statistics, P. O. Box 7388, Winston-Salem, North Carolina 27109}
\email{ferrarl@wfu.edu}

\author{Ellen Kirkman}
\address{Wake Forest University, Department of Mathematics and Statistics, P. O. Box 7388, Winston-Salem, North Carolina 27109} 
\email{kirkman@wfu.edu}

\author{W. Frank Moore}
\address{Wake Forest University, Department of Mathematics and Statistics, P. O. Box 7388, Winston-Salem, North Carolina 27109}
\email{moorewf@wfu.edu}

\author{Kewen Peng}
\address{North Carolina State University, Department of Computer Science, Campus Box 8206,
890 Oval Drive,
Engineering Building II,
Raleigh, NC 27695}
\email{kpeng@ncsu.edu}

\subjclass[2010]{Primary 16W22, 13A50, 16Z05}


\begin{abstract}  We present two noncommutative algebras over a field of characteristic zero that each possess a family of actions by cyclic groups of order $2n$, represented in $2 \times 2$ matrices, requiring generators of degree $3n$.
\end{abstract}
\maketitle
 
\section{Background}
Emmy Noether proved the following theorem that is useful in computing the invariants of a finite group  acting linearly on a commutative polynomial ring over a field of characteristic zero (or when the characteristic of $\kk$ is larger than $|G|$).

\begin{theorem}[Noether 1916 \cite{N}] If $\kk$ is a field of characteristic zero and $G$ is a finite group of invertible $n \times n$ matrices acting linearly on $A:= \kk[x_1, \dots, x_n]$ then the ring of invariants $A^G$ can be generated by polynomials of total degree $\leq |G|$. \end{theorem}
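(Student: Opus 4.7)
The plan is to combine the Reynolds operator with an application of Newton's identities to an auxiliary family of power sums. First I would define the Reynolds operator $R \colon A \to A^G$ by
\[ R(f) = \frac{1}{|G|} \sum_{g \in G} g \cdot f; \]
since $\mathrm{char}(\kk) = 0$, this is an $A^G$-linear projection of $A$ onto $A^G$. Consequently $A^G$ is spanned as a $\kk$-vector space by the elements $R(x^\alpha)$ as $\alpha \in \mathbb{N}^n$ ranges over monomial exponents, so the whole theorem reduces to showing that $R(x^\alpha) \in \kk[R(x^\beta) : |\beta| \leq |G|]$ whenever $|\alpha| > |G|$.

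Next I would package all $R(x^\alpha)$ with $|\alpha| = j$ simultaneously by introducing auxiliary indeterminates $u_1, \ldots, u_n$ (on which $G$ acts trivially) and considering
\[ U_j := \sum_{g \in G} \bigl( u_1 g(x_1) + \cdots + u_n g(x_n) \bigr)^j \in \kk[u_1, \ldots, u_n] \otimes_{\kk} A^G. \]
Expanding by the multinomial theorem, the coefficient of $u^\alpha$ in $U_j$ is the nonzero scalar multiple $|G|\binom{j}{\alpha} R(x^\alpha)$ of $R(x^\alpha)$. It therefore suffices to show that, for each $j > |G|$, the polynomial $U_j$ is a $\kk$-polynomial expression in $U_1, \ldots, U_{|G|}$.

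This last step follows from Newton's identities: setting $v_g := \sum_i u_i g(x_i)$ for $g \in G$, the polynomial $U_j$ is the $j$-th power sum $p_j(v_{g_1}, \ldots, v_{g_{|G|}})$ of $|G|$ quantities. Over $\mathbb{Q}$, Newton's identities express each elementary symmetric function $e_k$ in the $v_g$'s as a polynomial in $p_1, \ldots, p_k$, and then the length-$|G|$ recursion
\[ p_j = e_1 p_{j-1} - e_2 p_{j-2} + \cdots + (-1)^{|G|-1} e_{|G|} p_{j-|G|} \]
writes $U_j$ as a polynomial in $U_1, \ldots, U_{|G|}$ for every $j > |G|$. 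Equating coefficients of $u^\alpha$ on both sides then expresses $R(x^\alpha)$ for $|\alpha| > |G|$ as a $\kk$-polynomial in the $R(x^\beta)$ with $|\beta| \leq |G|$, as required.

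The main obstacle is the translation from ``$U_j$ is a polynomial in $U_1,\ldots,U_{|G|}$'' to the coefficient-wise conclusion about the $R(x^\alpha)$. One must verify that when a monomial $U_{j_1}\cdots U_{j_r}$ with $\sum j_i = j$ is expanded in the $u$'s, the coefficient of each $u^\alpha$ is a sum of products of $R(x^{\beta^{(1)}})\cdots R(x^{\beta^{(r)}})$ with $|\beta^{(i)}| = j_i \leq |G|$; this is a bookkeeping step but is precisely where the bound $|G|$ enters. Once this is in place, an induction on $|\alpha|$ concludes the argument.
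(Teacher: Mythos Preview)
The paper does not supply a proof of this statement: it is quoted as classical background and attributed to Noether's 1916 paper, so there is no in-paper argument to compare against. Your proposal is therefore being assessed on its own merits.

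Your argument is correct and is, in fact, essentially Noether's original 1916 proof. The three ingredients---the Reynolds projection in characteristic zero, the packaging of all $R(x^\alpha)$ of a given degree as $u$-coefficients of the power sums $U_j = \sum_{g\in G}\bigl(\sum_i u_i\, g(x_i)\bigr)^j$, and the use of Newton's identities to bound the generation of power sums in $|G|$ quantities by $p_1,\ldots,p_{|G|}$---are exactly the classical steps. The ``obstacle'' you flag is not a genuine gap: expanding $U_{j_1}\cdots U_{j_r}$ in the $u$'s manifestly gives, for the coefficient of $u^\alpha$, a $\kk$-linear combination of products $R(x^{\beta^{(1)}})\cdots R(x^{\beta^{(r)}})$ with $|\beta^{(i)}| = j_i \le |G|$, since each $U_{j_i}$ already has that form; so the passage from the $U_j$-identity to the coefficient-wise statement is immediate once the multinomial expansion of each $U_{j_i}$ is written down. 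One small cosmetic point: the induction you invoke at the end is really on $j=|\alpha|$ via the Newton recursion, which is what you already set up; no separate induction mechanism is needed.
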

The non-modular case (where characteristic of $\kk$ does not divide $|G|$) was proven independently by Fleischmann (2000) \cite{Fl}, Fogarty (2001) \cite{Fo}, and Derksen and Sidman (2004) \cite{DS}.
 
Noether's bound can be sharp in the case $G$ is a cyclic group, and Domokos and Heged\"{u}s  provided a smaller upper bound on the degrees of generators  if $G$ is not cyclic; this result was extended to all characteristics by Sezer.
\begin{theorem}[Domokos and Heged\"{u}s 2000 \cite{DH}, Sezer 2002 \cite{Se}]
 If $G$ is a non-cyclic finite group of invertible $n \times n$ matrices acting linearly on $A:= \kk[x_1, \dots, x_n]$ then the ring of invariants $A^G$ can be generated by polynomials of total degree $\leq 3|G|/4$ if $|G|$ is even, and $\leq 5|G|/8$ if $|G|$ is odd.
\end{theorem}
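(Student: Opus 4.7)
The plan is to reduce to the abelian case and then use zero-sum combinatorics on character groups. Throughout, I work in characteristic zero, where the Reynolds operator $R_G = \frac{1}{|G|}\sum_{g \in G} g$ is available, and I let $\beta(G)$ denote the Noether number of $G$, i.e.\ the smallest degree bound that works for every faithful linear action.

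First I would handle the abelian case. After diagonalizing, each variable $x_i$ has weight $\chi_i \in \widehat{G}$, and the invariant ring $A^G$ is spanned by monomials $\mathbf{x}^{\alpha}$ whose weight $\sum \alpha_i \chi_i$ vanishes in $\widehat{G}$. A minimal generating monomial corresponds to an irreducible zero-sum sequence in $\widehat{G} \cong G$, so
\[
\beta(G) \le D(G),
\]
where $D(G)$ is the Davenport constant. Since $G$ is non-cyclic, one has the classical estimate $D(G) \le \tfrac{|G|}{p} + p - 1$, with $p$ the smallest prime divisor of $|G|$. For $|G|$ even this yields $D(G) \le \tfrac{|G|}{2} + 1 \le \tfrac{3|G|}{4}$ provided $|G| \ge 4$; a direct check disposes of the Klein four-group. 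For $|G|$ odd, $p \ge 3$, so $D(G) \le \tfrac{|G|}{3} + 2 \le \tfrac{5|G|}{8}$ except for a handful of small groups such as $(\mathbb{Z}/3)^2$, which must be examined by hand.

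Next I would pass to the general case by induction on $|G|$, using the transfer/relative Reynolds operator. For any proper subgroup $H \le G$, averaging over coset representatives gives a surjection $A^H \twoheadrightarrow A^G$ raising degrees by the factor $[G:H]$ at worst, but one actually has $\beta(G) \le [G:H]\,\beta(H)$ only in trivial form; the sharper inequality I would invoke is Schmid's relation
\[
\beta(G) \le [G:N]\,\beta(N) \quad \text{when } N \triangleleft G,
\]
together with $\beta(G) \le \beta(N) + \beta(G/N) - 1$. Choosing $N$ to be the commutator subgroup reduces most cases to an abelian quotient, where the previous step applies; one then absorbs the multiplicative constants using $\beta(\text{cyclic of order }m) \le m$.

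The principal obstacle is obtaining the \emph{exact} constants $3/4$ and $5/8$ rather than weaker bounds of the form $c|G|$. This forces a careful case analysis precisely at the boundary: groups whose abelianization is cyclic (so the abelian reduction fails to save a factor), and small-rank abelian $p$-groups where the Davenport estimate is close to tight. For these, I would argue directly using the Helly-type property of short zero-sum sequences and, for the non-abelian boundary cases, a refined transfer argument that exploits the nontrivial commutator contribution to lower the degree of generators arising from products of $H$-orbits. Assembling these ingredients and reconciling the parities of $|G|$ is where the bulk of the technical work lies.
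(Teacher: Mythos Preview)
The paper does not prove this theorem. It is stated in the background section as a cited result from the literature (Domokos--Heged\H{u}s \cite{DH} in characteristic zero, extended by Sezer \cite{Se}), with no argument supplied; the paper's own contributions concern the noncommutative algebras $\kk_{-1}[u,v]$ and $A(0,1)$ treated in Sections~2 and~3. So there is nothing in the paper to compare your attempt against.

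That said, a brief comment on the proposal itself. The overall architecture---reduce to the abelian case via Davenport-constant estimates, then handle the general case through subgroup reductions---is indeed the strategy of the original Domokos--Heged\H{u}s paper. However, one of your key tools is wrong as stated: the additive inequality $\beta(G) \le \beta(N) + \beta(G/N) - 1$ fails already for cyclic groups (take $G$ cyclic of order $pq$ with $p,q$ coprime), and it is not what Schmid proved. The correct reduction inequalities are multiplicative in nature, e.g.\ $\beta(G) \le \beta(N)\cdot \beta(G/N)$ for $N \triangleleft G$, together with sharper relative-transfer bounds; getting from these to the precise constants $3/4$ and $5/8$ requires a genuinely delicate case analysis that your sketch acknowledges but does not carry out. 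So the proposal captures the spirit of the argument but, as written, rests on an incorrect inequality and would not close.
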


The Noether bound does not always hold if the field has characteristic $p$ (see, for example, Example 3.5.5(a) p. 94 \cite{DK}, where a degree $3$ invariant is required to generate the  invariants under a group of order $2$ acting on polynomials in $6$ variables over a field of characteristic $2$). For further background on the problem of finding degree bounds for groups acting on $A = \kk[x_1, \dots, x_n]$ see the survey of  Neusel  from  2007 \cite{Neusel}.
Symonds proved the following general theorem that is true for a field in any characteristic, showing that there is an upper bound that is a function of both the order of the group and the dimension of the representation of the group (i.e. the number of variables in the polynomial ring).
\begin{theorem}[Symonds 2011 \cite{S}]
If $G$ is a finite group of order $|G| > 1$ acting linearly on $A:= \kk[x_1, \dots, x_n]$ with $n \geq 2$ then the ring of invariants $A^G$ can be generated by polynomials of degree $\leq n (|G|-1)$.
\end{theorem}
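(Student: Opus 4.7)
The plan is to prove the bound via Castelnuovo--Mumford regularity of $A$ as a module over $R := A^G$, since degrees of algebra generators of $R$ can be controlled by regularity data of $A$ over $R$. The key reduction: since $G$ is finite, $A$ is a finitely generated $R$-module, and the Hilbert series of $R$ is given by Molien's formula (in the non-modular case) or more subtly in characteristic $p$. The goal is to show $\mathrm{reg}_R(A) \leq n(|G|-1) - n$, from which algebra-generation bounds for $R$ follow by comparing to a homogeneous system of parameters and tracking how a minimal set of algebra generators of $R$ refines into module generators of $A$ over $R$.

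First I would introduce the local cohomology modules $H^i_{\mathfrak{m}}(A)$ where $\mathfrak{m} \subset R$ is the irrelevant ideal of $R$ (equivalently, the radical of the extension to $A$), and compute $\mathrm{reg}_R(A) = \max_i \{\mathrm{top\ degree}(H^i_{\mathfrak{m}}(A)) + i\}$. Because $A = \kk[x_1,\dots,x_n]$ is a polynomial ring, local cohomology with respect to its own irrelevant ideal is concentrated in degree $n$ with top degree $-n$; the task becomes translating this to local cohomology over $R$, which requires controlling the $G$-action on $H^n_{\mathfrak{n}}(A)$ (where $\mathfrak{n}$ is the irrelevant ideal of $A$) and extracting the $G$-invariant part carefully.

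The heart of the argument is an inductive step on $n$: restrict to a generic hyperplane or choose a homogeneous $G$-stable element $f \in A$ of degree dividing $|G|$ and consider the short exact sequence $0 \to A(-\deg f) \xrightarrow{f} A \to A/(f) \to 0$, which after taking $G$-invariants and applying the regularity long exact sequence produces a recursion $\mathrm{reg}_R(A) \leq \deg f + \mathrm{reg}_{R/(f \cap R)}(A/fA)$. Iterating $n$ times with $\deg f \leq |G|$ at each stage yields a bound of roughly $n(|G|-1)$ after correcting for the initial module-over-algebra comparison.

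The hardest step will be establishing the clean inductive recursion: in the noncommutative-invariant-theory-adjacent setting one cannot in general find a regular sequence of low-degree invariants, so one must instead use a sequence of elements in $A$ itself (not in $R$), then Reynolds-project and control the defect. Symonds handles this via a detailed analysis of the spectral sequence $H^i(G, H^j_{\mathfrak{n}}(A)) \Rightarrow H^{i+j}_{\mathfrak{m}}(R)$, together with a clever degree-shift trick using trace maps. I expect that obtaining the exact coefficient $n$ in $n(|G|-1)$, rather than a weaker $|G|^n$-type bound, will be the main obstacle, and this is precisely where Symonds' novel idea of bounding regularity by comparing with explicit cohomology classes built from the $G$-action must enter.
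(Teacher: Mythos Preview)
The paper does not prove this theorem at all: it is quoted in Section~1 as background, attributed to Symonds \cite{S}, and no argument is supplied. There is therefore nothing in the paper to compare your proposal against.

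As a separate comment on your sketch relative to Symonds' actual proof: you have the right large-scale framework (bound the Castelnuovo--Mumford regularity, then read off generator degrees), but the internal mechanism you describe is not what Symonds does. His induction is on the group, not on the number of variables: one reduces to a Sylow $p$-subgroup, and for a $p$-group $G$ one passes to a normal subgroup $N$ of index $p$, compares $A^G$ with $A^N$, and controls the discrepancy through the relative transfer map. There is no ``generic hyperplane'' step, and the short exact sequence you wrote down (multiplication by a $G$-stable $f$ of degree dividing $|G|$) does not drive the argument. The spectral sequence you mention is also not the engine; the sharp constant $n$ in $n(|G|-1)$ comes from a careful bound on the top degree of $A$ over $A^G$ obtained by iterating the index-$p$ step $\log_p|G|$ many times, combined with the known regularity of the polynomial ring itself. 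So while your opening paragraph points in the right direction, the middle of the proposal would not assemble into a proof as written, and in any case the present paper makes no attempt to reproduce Symonds' argument.
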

The case of group actions on the noncommutative skew polynomial ring $A=\kk_{-1}[x_1, \dots, x_n]$ (polynomials where $x_jx_i = -x_ix_j$ for $i \neq j$), was considered by Kirkman, Kuzmanovich, and Zhang (\cite{KKZ}), where results concerning permutation actions on $A=\kk_{-1}[x_1, \dots, x_n]$ were proven, and it was noted that the Noether bound does not hold when $n=2$ and $G$ is the group of order 2 generated by the transposition of variables, 
since a generating set of the fixed ring requires a generator of degree $3$. Here the difference between the order of the group and the largest degree needed in a set of generators is only 1, but in this paper we show that this difference can be arbitrarily large. 

Throughout this paper  let  $\kk$ be an algebraically closed field of characteristic 0,  $\i^2 = -1$, and
$G$ be the cyclic group of order $2n$ generated by the matrix 
$g:= \begin{bmatrix} 0 & \lambda\\1 & 0 \end{bmatrix}$
where $\lambda = e^{2 \pi \i/n}$ is a primitive $n$th root of unity,  For an algebra $A$ on which $g$ acts, let $\beta(g)$ denote the minimal degree $d$ such that  the ring of invariants $A^G$ has a set of algebra generators of degree $\leq d$.
We compute $\beta(g)$ explicitly in Section 2 for
 the skew polynomial ring $\kk_{-1}[u,v]$,  and in Section 3 for the down-up algebra $A(0,1)$.  In both cases, when $n$ is odd $\beta(g) = 3n$
(Theorems \ref{n1mod2} and \ref{downupnodd}), the Noether bound does not hold, and $\beta(g)$ is arbitrarily larger than the order of the group. Moreover,  in the case where $A = \kk[x_1, \dots, x_n]$ Noether's bound is exceeded in characteristic $p$ by using a
representation of $G$ of large dimension. For the noncommutative algebras that we consider here the large values of $\beta(g)$ are achieved using only a two-dimensional representation.  Both of these algebras were considered because they are Artin-Schelter regular algebras (as in \cite{AS}), and can be regarded as natural noncommutative generalizations of commutative polynomial rings. On the other hand, F. Gandini \cite[Theorem VI.13]{G} has shown that the Noether bound holds for exterior algebras $\kk_{-1}[x_1, \dots, x_n]/(x_1^2, \dots, x_n^2)$.  The analog of Noether's bound for noncommutative algebras is not yet evident, but these examples provide data for further research.

Many of the paper's computations were provided by the fourth author in an undergraduate research project in mathematics at Wake Forest University.  Computations were aided by the NCAlgebra package in Macaulay2 \cite{M2}.
\section{A cyclic group acting on a skew polynomial ring}

Let $A := \kk_{-1}[u,v]$ be the skew polynomial ring with $vu = -uv$.
The group $G$ acts on $A$ by $g.u = v$ 
and $g.v = \lambda u $.
Let $\mathcal{O}(i,j):=\sum_k g^k.(u^iv^j)$ denote the orbit sum of the monomial $u^iv^j$, i.e. the sum of the distinct elements in the orbit of $u^iv^j$ under the action of $G$. The orbit sum of a monomial is equal, up to a constant, to 
the value of the Reynolds operator
$R_G(u^iv^j) = \sum_{g \in G} g.(u^iv^j)/|G| $, where the sum is taken over all elements of the group.
\begin{proposition}\label{invar}
An element $a\in A^G$ is a linear combination of elements of the form
$\mathcal{O}({kn-i},i) :=u^{kn-i}v^i+(-1)^{(kn-i)i}\lambda^iu^iv^{kn-i},$
for some $k,i$, where $k \geq 1$ and $0 \leq i \leq \lfloor kn/2 \rfloor$.
\end{proposition}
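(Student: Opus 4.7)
The plan is to use the Reynolds operator together with an explicit calculation of orbit sums.

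Since the action of $G$ is graded and $\kk$-linear, the Reynolds operator $R_G\colon A \to A^G$ is a graded surjection onto $A^G$. In particular, $A^G$ is spanned by $\{R_G(u^a v^b) : a,b \geq 0\}$, and it suffices to show that each nonzero value of $R_G$ on a monomial is a scalar multiple of some $\mathcal{O}(kn-i,i)$ with $k \geq 1$ and $0 \leq i \leq \lfloor kn/2 \rfloor$.

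I would carry this out by direct computation. From $g.u = v$, $g.v = \lambda u$, and $vu = -uv$ we have
$$g.(u^a v^b) = v^a(\lambda u)^b = \lambda^b v^a u^b = (-1)^{ab}\lambda^b u^b v^a,$$
so $g^2.(u^a v^b) = \lambda^{a+b} u^a v^b$. Iterating yields $g^{2j}.(u^a v^b) = \lambda^{j(a+b)} u^a v^b$ and $g^{2j+1}.(u^a v^b) = (-1)^{ab}\lambda^{b+j(a+b)} u^b v^a$. Summing over $j = 0, \dots, n-1$ and invoking the geometric-sum identity $\sum_{j=0}^{n-1}\lambda^{j(a+b)} = n$ if $n \mid a+b$ and $0$ otherwise, I obtain $R_G(u^a v^b) = 0$ unless $a + b = kn$ for some $k \geq 1$. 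In that case, writing $a = kn - i$ and $b = i$, $R_G(u^a v^b)$ is a nonzero scalar multiple of $u^{kn-i}v^i + (-1)^{(kn-i)i}\lambda^i u^i v^{kn-i}$, which is $\mathcal{O}(kn-i,i)$.

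Finally, since $u^b v^a$ is a scalar multiple of $g.(u^a v^b)$, the monomials $u^a v^b$ and $u^b v^a$ lie in the same $G$-orbit, so $\mathcal{O}(kn-i,i)$ and $\mathcal{O}(i,kn-i)$ differ only by a scalar. Restricting to $0 \leq i \leq \lfloor kn/2 \rfloor$ therefore records every orbit sum exactly once without losing any invariant. The only real care needed is in tracking the sign $(-1)^{ab}$ coming from the skew relation; this sign is precisely what produces the factor $(-1)^{(kn-i)i}$ in the stated formula, and once it is handled correctly the remaining steps — the geometric-sum collapse and the orbit identification — are routine.
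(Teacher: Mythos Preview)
Your proof is correct and follows essentially the same route as the paper's. The paper argues first that $g^2$ acts as the scalar $\lambda^{a+b}$ on $u^a v^b$, forcing the degree of any homogeneous invariant to be a multiple of $n$, and then observes that the $G$-orbit of $u^{kn-i}v^i$ has at most two elements; you reach the same conclusions by summing the Reynolds operator explicitly and collapsing the geometric series $\sum_{j=0}^{n-1}\lambda^{j(a+b)}$. The sign bookkeeping and the final restriction to $0\le i\le\lfloor kn/2\rfloor$ are handled correctly.
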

\begin{proof}
Since the elements of $G = (g)$ do not change the total degree of an element of $A$, an invariant is a linear combination of homogeneous elements of some degree $p$; these elements are linear combinations of monomials of the form $u^{p-i}v^i$ for $i = 0, \dots, p$ for some $p$. 
Since $ g^2:= \begin{bmatrix}
   \lambda & 0\\
   0 & \lambda
\end{bmatrix}$, any homogeneous invariant must be a linear combination of invariants of degree a multiple of $n$, indeed $$g^2.(u^{p-i}v^i) = \lambda^p
u^{p-i}v^i = u^{p-i}v^i \Leftrightarrow p\equiv0\mod n.$$
Hence homogeneous invariants are linear combinations of orbit sums of monomials  of the form  $u^{kn-i}v^i$ for some $k,i$, and the orbit of $u^{kn-i}v^i$ is $$\{
u^{kn-i}v^i, (-1)^{(kn-i)i}\lambda^iu^iv^{kn-i}
\}$$ because $g^2.u^{kn-i}v^i = u^{kn-i}v^i$.
\end{proof}
Note that when the monomial is an invariant, the two summands in the proposition above are identical, and we have defined $\mathcal{O}(i,j)$ to be twice the sum of monomials in the orbit (e.g. when $n=2$, we define $\mathcal{O}(2,2) = 2u^2d^2$).  Further, it is possible for $\mathcal{O}(i,j) = 0$ (e.g. when $n=2$, we have $\mathcal{O}(1,1) = uv + vu = 0$).

In this section we will show that
\begin{align*}
\beta(g) =
\begin{cases} n &\text{ if } n \equiv 2 \mod{4}\\
              2n &\text{ if } n \equiv 0 \mod{4}\\
              3n &\text{ if } n \equiv 1 \mod{2}.
            \end{cases}
              \end{align*}
Hence the Noether bound does not hold when $n$ is odd.
\subsection{Case $n$ even}  We begin with the case $n \equiv 2 \mod{4}$.
       \begin{theorem}\label{n2mod4}
If $n \equiv2 \mod 4$ then $\beta(g)=n \lneq |G|=2n$.
\end{theorem}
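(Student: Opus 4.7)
The plan is to establish both inequalities $\beta(g)\geq n$ and $\beta(g)\leq n$. The lower bound is immediate from Proposition~\ref{invar}: every nonzero homogeneous element of $A^G$ has degree a positive multiple of $n$, so $A^G$ admits no generators in degrees $1,\ldots,n-1$, and since $\mathcal{O}(n,0)=u^n+v^n$ is a nonzero degree-$n$ invariant, $\beta(g)\geq n$.

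For the upper bound I would set $y_j:=\mathcal{O}(n-j,j)$ for $0\leq j\leq n/2$ (the $n/2+1$ degree-$n$ orbit sums) and prove by induction on $k$ that every orbit sum $\mathcal{O}(kn-i,i)$, for $0\leq i\leq kn/2$, lies in $B:=\kk\langle y_0,\ldots,y_{n/2}\rangle$. The base case $k=1$ holds by construction. For the inductive step, the hypothesis $n\equiv 2\pmod 4$ is used in two crucial ways: it makes $n$ even, so $u^n$ and $v^n$ (and thus all $\mathcal{O}(mn,0)=u^{mn}+v^{mn}$) are central in $A$; and it makes $n/2$ odd, which together with $\lambda^{n/2}=-1$ collapses $\mathcal{O}(n/2,n/2)$ to the single monomial $y_{n/2}=2u^{n/2}v^{n/2}$. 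Exploiting these, I would derive two product identities valid for $k\geq 2$:
\[
y_i\cdot \mathcal{O}((k-1)n,0)\;=\;\mathcal{O}(kn-i,i)\,+\,c_i\,\mathcal{O}((k-1)n+i,\,n-i),
\]
with $c_i=(-1)^{(n-i)i}\lambda^i$, which follows from the centrality of $\mathcal{O}((k-1)n,0)$; and
\[
y_{n/2}\cdot \mathcal{O}((k-1)n-j_2,j_2)\;=\;2(-1)^{j_2}\,\mathcal{O}(kn-(n/2+j_2),\,n/2+j_2),
\]
which collapses to a single orbit sum because $y_{n/2}$ is a single monomial.

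Combining these identities closes the induction in two cases. For $i\in[n/2,kn/2]$, taking $j_2=i-n/2\in[0,(k-1)n/2]$ in the second identity places $\mathcal{O}(kn-i,i)$ in $B$, since $\mathcal{O}((k-1)n-j_2,j_2)$ is in $B$ by the inductive hypothesis. For $i\in[0,n/2)$, the second summand $\mathcal{O}((k-1)n+i,n-i)$ of the first identity has second index $n-i>n/2$, so is already in $B$ by the previous case; subtracting an appropriate scalar multiple of it from $y_i\cdot \mathcal{O}((k-1)n,0)$ isolates $\mathcal{O}(kn-i,i)$. Combined with Proposition~\ref{invar}, this gives $A^G=B$ and hence $\beta(g)\leq n$.

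The main obstacle I foresee is the coefficient bookkeeping needed to verify the two product identities in their stated form — in particular, confirming that the repeated application of $vu=-uv$ produces exactly the claimed orbit sums (so that the $(-1)^{ab}$ and $\lambda^c$ factors align to match the normalization of $\mathcal{O}(kn-i,i)$ rather than some other invariant) and that the scalar $2(-1)^{j_2}$ in the second identity is indeed nonvanishing. Both rest on the parities of $n/2$ and $(n/2)^2$ and on $\lambda^{n/2}=-1$, which are precisely the arithmetic features controlled by the hypothesis $n\equiv 2\pmod 4$.
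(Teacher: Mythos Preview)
Your proposal is correct and follows essentially the same route as the paper: induction on $k$, splitting into the cases $i\geq n/2$ (handled via the single-monomial invariant $u^{n/2}v^{n/2}$, which exists precisely because $n/2$ is odd and $\lambda^{n/2}=-1$) and $i<n/2$ (handled by multiplying a degree-$n$ orbit sum against a central degree-$(k-1)n$ invariant, leaving a residual term covered by the first case). The only cosmetic differences are that the paper uses $\mathcal{O}(n,0)\cdot\mathcal{O}((k-1)n-i,i)$ in the second case where you use $y_i\cdot\mathcal{O}((k-1)n,0)$, and that you spell out the (trivial) lower bound $\beta(g)\geq n$ explicitly.
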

\begin{proof}
We show that all invariants can be obtained from the invariants of degree $n$, namely from $\mathcal{O}(n-i,i) = u^{n-i}v^i+ (-1)^i\lambda^iu^iv^{n-i},$ for $i=0,\dots,n/2.$
Inducting on $k$ we show these elements generate all invariants of degree $kn$ where $k \geq2$. Note that $\mathcal{O}({n/2},{n/2})/2 = u^{n/2}v^{n/2}$ is an invariant of degree $n$, since $n/2$ is odd and $g.u^{n/2}v^{n/2} = v^{n/2} \lambda^{n/2} u^{n/2} = - \lambda^{n/2} u^{n/2}v^{n/2} = u^{n/2}v^{n/2}$. It suffices to show that the invariants of the form $a=u^{kn-i}v^i+(-1)^i\lambda^iu^iv^{kn-i}$ can be generated. If $i\geq n/2,$ 
\begin{align*}
a&= u^{n/2}v^{n/2}((-1)^{i+1}u^{(2k-1)n/2-i}v^{i-n/2}+(-1)^i(-1)^{i+1}\lambda^iu^{i-n/2}v^{(2k-1)n/2-i})\\
&=(-1)^{i+1}u^{n/2}v^{n/2}(u^{(2k-1)n/2-i}v^{i-n/2}+(-1)^i\lambda^iu^{i-n/2}v^{(2k-1)n/2-i})\\
&=(-1)^{i+1}u^{n/2}v^{n/2}f,
\end{align*} 
where $f$ is the orbit sum of $u^{(2k-1)n/2-i}v^{i-n/2}$, an invariant of degree $(k-1)n$ and hence generated by induction.
If $i < n/2$, $$(u^n+v^n)(u^{(k-1)n-i}v^i+(-1)^i\lambda^iu^iv^{(k-1)n-i})=a+g$$ 
where $g$ is $u^{(k-1)n-i}v^{n+i}+(-1)^i\lambda^iu^{n+i}v^{(k-1)n-i}$ where $n+i \geq n/2$, and $f$ can be generated by the previous case. Hence  we have shown  $\beta(g)=n.$
\end{proof}

   Next we consider the case where $n \equiv 0 \mod{4}$.    
\begin{theorem}\label{n0mod4}
If $n \equiv 0 \mod 4$, then $\beta(g)=2n$.
\end{theorem}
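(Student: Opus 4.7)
The plan is to establish the two inequalities $\beta(g) \leq 2n$ and $\beta(g) \geq 2n$ separately. When $n \equiv 0 \pmod{4}$, the "central" element $u^{n/2}v^{n/2}$ that underlies Theorem~\ref{n2mod4} is no longer invariant, and $u^nv^n$ takes its place --- a single degree-$2n$ element that both reduces higher-degree invariants (upper bound) and resists being built out of lower-degree ones (lower bound).

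\emph{Upper bound.} I would show, by induction on $k \geq 3$, that every orbit sum $\mathcal{O}(kn-i, i)$ lies in the subalgebra $B := \kk\langle A^G_n \cup A^G_{2n}\rangle$. Because $n$ is even, $v^n$ commutes with every power of $u$, giving a clean reduction
\[
u^nv^n \cdot \mathcal{O}((k-1)n - i,\, i-n) = \mathcal{O}(kn-i, i) \qquad (i \geq n),
\]
whose right-hand factor has smaller degree $(k-2)n$. For $i < n$ one has
\[
(u^n + v^n)\cdot\mathcal{O}((k-1)n - i, i) = \mathcal{O}(kn - i, i) + \mathcal{O}(kn - (n+i), n+i),
\]
and the extra summand has index $n+i \geq n$, so its canonical form falls under the previous identity. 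The base $k=3$ lives in $A^G_n \cdot A^G_{2n}$, and the induction closes.

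\emph{Lower bound.} I would prove $u^nv^n \notin \kk\langle A^G_n\rangle$. Since invariants are concentrated in degrees divisible by $n$, the degree-$2n$ piece of $\kk\langle A^G_n\rangle$ equals the image $I$ of the multiplication map $A^G_n \otimes A^G_n \to A^G_{2n}$. A direct expansion using $vu = -uv$ and $\lambda^n = 1$ yields, for $0 \leq i \leq j \leq n/2-1$,
\[
\mathcal{O}(n-i, i)\,\mathcal{O}(n-j, j) = (-1)^{ij}\mathcal{O}(2n-(i+j), i+j) + (-1)^{j+ij}\lambda^j\mathcal{O}(n+(j-i), n-(j-i)),
\]
which for $i=j$ specializes to $(-1)^i\mathcal{O}(2n-2i, 2i) + \lambda^i\mathcal{O}(n,n)$. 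I would then construct a linear functional $L$ on $A^G_{2n}$ vanishing on $I$ by normalizing $L(\mathcal{O}(n,n)) = 1$: the square relations force $L(\mathcal{O}(2n-2i, 2i)) = -(-1)^i\lambda^i$ for $i = 0, \dots, n/2-1$, and the cross-product relations then determine $L$ on $\mathcal{O}(2n-k, k)$ for odd $k$, up to one free parameter. Such an $L$ would immediately give $L(u^nv^n) = \tfrac{1}{2} \neq 0$ and hence $u^nv^n \notin I$.

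\emph{Main obstacle.} The delicate point is checking that the many cross-product relations defining $L$ are mutually consistent; a short manipulation shows consistency is equivalent to the single identity $(-1)^{n/2}\lambda^{n/2} = -1$. Since $\lambda^{n/2} = e^{\pi\i} = -1$ for every $n$, this collapses to $(-1)^{n/2} = 1$, i.e.\ $n \equiv 0 \pmod{4}$ --- exactly our hypothesis. The same check fails for $n \equiv 2 \pmod{4}$, reflecting that then $u^nv^n = -(u^{n/2}v^{n/2})^2 \in \kk\langle A^G_n\rangle$, and hence the smaller bound of Theorem~\ref{n2mod4}.
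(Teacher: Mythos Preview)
Your proposal is correct and follows essentially the same route as the paper: the upper bound is proved by the identical induction using $u^nv^n$ and $u^n+v^n$, and the lower bound by exhibiting a nontrivial element in the kernel of the multiplication map $A^G_n\otimes A^G_n\to A^G_{2n}$ (your linear functional $L$ is exactly such a kernel vector, read dually). The paper simply writes down the kernel vector $x_p=(-1)^{\lfloor (p+1)/2\rfloor}\lambda^{\lfloor(p+1)/2\rfloor}$ and verifies it case by case, whereas you derive it by normalising $L(\mathcal{O}(n,n))=1$ and reducing the consistency check to the single identity $(-1)^{n/2}\lambda^{n/2}=-1$; this is a pleasant conceptual sharpening that makes visible precisely where the hypothesis $n\equiv 0\pmod 4$ enters, but the underlying argument is the same.
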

\begin{proof}
We first show that $\beta(g) \leq 2n$.
Let $a\in A^G$ of degree $kn$ for $k \geq2$; we show, by induction on $k$ that $a$ can be generated by elements
of degree $\leq 2n$. 
Note that $u^nv^n \in A^G$ is an invariant of degree $2n$.
Without loss of generality by Proposition \ref{invar} we can assume that $a$ is of
the form $a:= u^{kn-i}v^i + (-1)^i\lambda^i u^iv^{kn-i}$. If $i \geq n$ then we can factor the invariant $u^nv^n$ from $a$, writing $a$ as  $a= (u^nv^n)b$, where $b$ is an invariant of degree $(k-2)n$, and $b$ can
be generated by invariants of degree $\leq 2n$ by induction.  Therefore it suffices to prove that if $i < n$ then $a$ can be generated by
invariants of degree $\leq 2n$.

If $i < n$ note that $\mathcal{O}((k-1)n-i,i) \mathcal{O}(n,0) = \mathcal{O}(kn-i,i) + \mathcal{O}((k-1)n-i,n+i),$
where $\mathcal{O}((k-1)n-i,i)$ can
be generated by invariants of degree $\leq 2n$
by induction and $\mathcal{O}((k-1)n-i,n+i)$
can
be generated by invariants of degree $\leq 2n$
by the case above.
 So $\beta(g) \leq 2n$.

Finally, we show $\beta(g) \geq 2n$, i.e. $A^G$ cannot be generated by linear combinations of elements of degree $n$. Invariants of degree $n$ are linear combinations of elements of the form $\mathcal{O}(u^{n-i}v^i):=u^{n-i}v^i+(-1)^i\lambda^iu^iv^{n-i}$
for $i=0,\dots,n/2-1$.
Note that $u^{n/2}v^{n/2}$ is not invariant because $g.u^{n/2}v^{n/2} = \lambda^{n/2}v^{n/2}u^{n/2}
=-v^{n/2}u^{n/2} =-u^{n/2}v^{n/2}$ because $n/2$ is even. These $n/2$ invariants are linearly independent.

One checks that when $n$ is even these orbit sums satisfy the equation:
\begin{align}
\label{4n}
\mathcal{O}(n-i,i) \mathcal{O}(n-j,j)& = (-1)^{ij} \mathcal{O}(2n-i-j,i+j) \nonumber\\ &+
\begin{cases} (-1)^{i(j+1)}\lambda^i\mathcal{O}(n +i-j,n-i+j) & 0\leq j \leq i < n/2\\
 (-1)^{j(i+1)} \lambda^j\mathcal{O}(n +j-i,n-j+i) & 0\leq i \leq j < n/2,\\
 \end{cases}\end{align}
 so $\mathcal{O}(n-i,i) \mathcal{O}(n-j,j) =\mathcal{O}(n-j,j) \mathcal{O}(n-i,i).$
 Hence in considering the products of invariants of degree $n$ we may assume that $i \geq j$.
 We will show that one cannot write all orbit sums of degree  $2n$ as linear combinations 
of products  of orbit sums of degree $n$ by showing that the homogeneous system of $\binom{n/2+1}{2}$ equations in $n$ variables (\ref{4NSys}) below has a nontrivial solution.  The coefficient matrix of this system is the coefficients of the orbit sums of degree $2n$ in equation (\ref{4n}) above. The existence of this solution shows that
the dimension of the space spanned by products of
two invariants of degree $n$ is $<$ the dimension of the space spanned by the orbit sums of degree $2n$. Hence we next show that the system
\begin{equation}\label{4NSys}
0  = (-1)^{ij}x_{i+j} + (-1)^{i(j+1)}\lambda^ix_{n-i+j}\quad  0\leq j \leq i < n/2\\
 \end{equation}
 in the variables $x_p$ for $p=0,\ldots, n-1$ has a nontrivial solution given by
\begin{equation}\label{4NSysSol}
x_p=(-1)^{\lfloor\frac{p+1}{2}\rfloor}\lambda^{\lfloor\frac{p+1}{2}\rfloor}.
\end{equation}



 First we consider the case $i = 2\ell$ and $j=2k-1$ for $\ell = 0, \dots, n/2 -1$ and $k= 1, \dots, n/2$, and without loss of generality we can assume $i \geq j$. In this case the equation (\ref{4NSys})  becomes
$0=x_{2(l+k)-1}+\lambda^{2l}x_{2(n/2-l+k)-1}.$
Substituting the expression in \eqref{4NSysSol} for the variables we get 
\begin{align*}&(-1)^{\ell + k}\lambda^{\ell + k}+ \lambda^{2\ell} (-1)^{(n/2- \ell +k)} \lambda^{(n/2-\ell + k)}\\&=
 (-1)^{\ell + k}\lambda^{\ell + k}+ \lambda^{2\ell} (-1)^{(- \ell +k)} \lambda^{n/2}\lambda^{(-\ell + k)}\\
 &= (-1)^{\ell + k}\lambda^{\ell + k}+  (-1)^{(- \ell +k)} (-1)\lambda^{(\ell + k)}\\
 &=  (-1)^{\ell + k}\lambda^{\ell + k}+  (-1)^{( \ell +k)} (-1)\lambda^{(\ell + k)} = 0. 
\end{align*}

The cases $i,j$ both even or both odd are handled similarly. 
   Hence the products of degree $n$ orbit sums cannot determine all the orbit sums $\mathcal{O}(2n-i, i)$, and hence invariants of degree $n$ do not generate all invariants of degree $2n$.
\end{proof}

\subsection{Case $n$ odd} When $n$ is odd we prove that the Noether bound does not hold.

\begin{lemma}\label{multi} If $n$ is odd then an orbit sum of degree $2n$ and an orbit sum of degree $n$ multiply as 
\begin{align*}
\mathcal{O}(2n-i,i)\mathcal{O}(n-j,j)&=(-1)^{i(j-1)}\mathcal{O}(3n-i-j,i+j)\\
&+\begin{cases}(-1)^{ij}\lambda^j\mathcal{O}(2n-i+j,n+i-j)&i-j<\frac{n}{2}\\(-1)^{ij}\lambda^i\mathcal{O}(n+i-j,2n+j-i)&i-j>\frac{n}{2}\end{cases},
\end{align*}
\begin{align*}
\mathcal{O}(n-j,j)\mathcal{O}(2n-i,i)&=(-1)^{ij}\mathcal{O}(3n-i-j,i+j)\\
&+\begin{cases}(-1)^{i(j-1)}\lambda^j\mathcal{O}(2n-i+j,n+i-j)&i-j<\frac{n}{2}\\(-1)^{i(j-1)}\lambda^i\mathcal{O}(n+i-j,2n+j-i)&i-j>\frac{n}{2}\end{cases}.
\end{align*}
\end{lemma}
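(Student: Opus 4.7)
The plan is to compute both products directly from the definition of the orbit sums. By Proposition~\ref{invar}, one writes
\begin{align*}
\mathcal{O}(2n-i,i) &= u^{2n-i}v^i + (-1)^{i}\lambda^i u^i v^{2n-i},\\
\mathcal{O}(n-j,j) &= u^{n-j}v^j + \lambda^j u^j v^{n-j},
\end{align*}
where the sign coefficients have been simplified using the hypothesis that $n$ is odd (so $(-1)^{(2n-i)i}=(-1)^i$ and $(-1)^{(n-j)j}=1$). Multiplying out produces four cross-terms, each of which is brought to standard form $\alpha u^c v^d$ via the skew relation $v^a u^b = (-1)^{ab}u^b v^a$; parities of the resulting sign exponents simplify using $n$ odd, and any $\lambda^k$ with $k\geq n$ reduces via $\lambda^n=1$.

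The four resulting monomials split naturally into two $g$-orbits. The diagonal pair, $u^{3n-i-j}v^{i+j}$ and a scalar multiple of $u^{i+j}v^{3n-i-j}$, combines after tracking signs and $\lambda$-powers into $(-1)^{i(j-1)}\mathcal{O}(3n-i-j,i+j)$; this is the standard labeling because $3n-i-j>i+j$ throughout the relevant range of $i,j$ coming from Proposition~\ref{invar}. The off-diagonal pair, $u^{2n-i+j}v^{n+i-j}$ and a scalar multiple of $u^{n+i-j}v^{2n-i+j}$, must be rewritten as a single orbit sum whose first exponent is at least the second; this forces the case distinction between $2n-i+j>n+i-j$ and $2n-i+j<n+i-j$, i.e., $i-j<n/2$ versus $i-j>n/2$. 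Since $n$ is odd, the equality $i-j=n/2$ cannot occur, so the two cases in the statement are exhaustive. The transition $\lambda^j\leftrightarrow\lambda^i$ between the two cases is a consequence of the convention in Proposition~\ref{invar} that ties the $\lambda$-power in the second summand of an orbit sum to the $v$-exponent of the first summand, combined with $\lambda^n=1$.

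The second identity for $\mathcal{O}(n-j,j)\mathcal{O}(2n-i,i)$ is obtained by the same calculation with the order of multiplication reversed: each cross-term monomial now picks up an extra sign from commuting a $v$-power past a $u$-power, and a short parity check shows that the net effect is to exchange the coefficients $(-1)^{i(j-1)}$ and $(-1)^{ij}$ between the diagonal and off-diagonal orbit sums, while leaving the $\lambda$-powers and the case split intact. The hardest part is bookkeeping: each of the several signs requires a short parity argument relying on $n$ odd, and powers of $\lambda$ must be tracked carefully modulo $n$, but the calculation is otherwise routine.
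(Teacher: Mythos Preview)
Your proposal is correct and follows essentially the same approach as the paper: both expand the orbit sums into their two monomial summands using Proposition~\ref{invar} (with signs simplified via $n$ odd), multiply out the four cross-terms using the skew relation, and regroup into a diagonal orbit sum plus an off-diagonal one whose standard labeling forces the case split $i-j\lessgtr n/2$. The paper exhibits the explicit computation for one case and declares the others similar, while you give a narrative description of all cases together, but the underlying calculation is the same.
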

\begin{proof}
We calculate $\mathcal{O}(2n-i,i)\mathcal{O}(n-j,j)$ when $i-j<\frac{n}{2}$, the other cases are similar,
\begin{align*}
\mathcal{O}(2n-i,i)\mathcal{O}(n-j,j)&=(u^{2n-i}v^i+(-1)^i\lambda^i u^iv^{2n-i})(u^{n-j}v^j+\lambda^j u^jv^{n-j})\\
&=(-1)^{i(j-1)}u^{3n-i-j}v^{i+j}+(-1)^{i(j-1)}\lambda^{i+j}u^{i+j}v^{3n-i-j}\\
&\hphantom{=}+(-1)^{ij}\lambda^ju^{2n-i+j}+(-1)^{ij}\lambda^iu^{n-j+i}v^{2n-i+j}.
\end{align*}
Now it suffices to notice that
\[
(-1)^{i(j-1)}u^{3n-i-j}v^{i+j}+(-1)^{i(j-1)}\lambda^{i+j}u^{i+j}v^{3n-i-j}=(-1)^{i(j-1)}\mathcal{O}(3n-i-j,i+j),
\]
and if $i-j<\frac{n}{2}$ then $2n-i+j>n+i-j$ and therefore
\[
(-1)^{ij}\lambda^ju^{2n-i+j}+(-1)^{ij}\lambda^iu^{n-j+i}v^{2n-i+j}=(-1)^{ij}\lambda^j\mathcal{O}(2n-i+j,n+i-j).
\]
\end{proof}
Notice that when $i$ is even the two expressions in Lemma \ref{multi} coincide; this is expected since in this case $\mathcal{O}(2n-i,i)$ is central. When $i$ is odd then the two products in Lemma \ref{multi} differ by a sign.

\begin{theorem}\label{n1mod2}
If $n \equiv1 \mod 2$ then $\beta(g)=3n \gneq |G|=2n$.
\begin{proof}

We first show that $\beta(g) \leq 3n$.
It suffices to show by induction on $k$ that for $k \geq 4$, an invariant of the form $a = \mathcal{O}({kn-i}, i)$ can be generated by invariants of degree $\leq 3n.$
Note that $u^nv^n$ is not invariant since $g.u^n v^n = \lambda^nv^nu^n
=v^nu^n =-u^n v^n$ because $n$ is odd.
 
First we show that we can generate the invariant $u^{2n}v^{2n}.$  We have
the equations:
$$(u^n+v^n)(u^{3n}+v^{3n})=(u^{4n}+v^{4n})-(u^{3n}v^n-u^nv^{3n})$$
$$(u^{3n}+v^{3n})(u^n+v^n)=(u^{4n}+v^{4n})+(u^{3n}v^n-u^nv^{3n}).$$
Adding the two equations above shows that $u^{4n}+v^{4n}$ can be generated by invariants of degree $\leq 3n.$ Using the equation $(u^{2n}+v^{2n})^2=u^{4n}+v^{4n}+2u^{2n}v^{2n}$ we see that
$u^{2n}v^{2n}$ can be generated by invariants of degree $\leq3n$.

Next we show that we can generate any orbit sum $a=\mathcal{O}({kn-i},i)$ for $k\geq4$ if $i \geq2n$.   In this case we can factor out $u^{2n}v^{2n}$ and obtain
\begin{align*}
a&=u^{2n}v^{2n}(u^{(k-2)n-i}v^{i-2n}+(-1)^{i(k+1)}\lambda^{i}u^{i-2n}v^{(k-2)n-i})\\
&=u^{2n}v^{2n}\mathcal{O}({(k-2)n-i},i-2n),
\end{align*}
and since $\mathcal{O}({(k-2)n-i},i-2n)$ has total degree $(k-4)n$ it can be generated by invariants of degree $\leq 3n$ by induction, and therefore so can $a$.
If $i<2n$ we have the equation $$(u^{2n}+v^{2n})\mathcal{O}({(k-2)n-i},i)=\mathcal{O}({kn-i},i)+\mathcal{O}({(k-2)n-i},{2n+i}),$$
and $\mathcal{O}({(k-2)n-i},i)$ is of degree $(k-2)n$, so generated by invariants of degree $\leq 3n$ by induction. 
Further $\mathcal{O}({(k-2)n-i},{2n+i})$ is generated by invariants of degree $\leq 3n$ by the first case. Therefore $a=\mathcal{O}({kn-i},i)$ is generated by invariants of degree $\leq 3n$, and we have shown $\beta(g)\leq3n.$

Next we show $\beta(g) \geq 3n$.
We claim that the vector with entries
\[
x_k=(-1)^{\frac{(\frac{3n-1}{2}-k)(\frac{3n-1}{2}-k+1)}{2}}(\lambda^{\frac{n-1}{2}})^{\frac{3n-1}{2}-k}
\]
 is a solution to the equations 
\[
0=(-1)^{ij}x_{i+j}+\begin{cases}(-1)^{i(j-1)}\lambda^jx_{n+i-j}&i-j<\frac{n}{2}\\(-1)^{i(j-1)}\lambda^ix_{2n+j-i}&i-j>\frac{n}{2}\end{cases}.
\]
We start by checking the first equation in the system  in Lemma $\ref{multi}$
\begin{align*}
0&=(-1)^{ij}(-1)^{\frac{(\frac{3n-1}{2}-i-j)(\frac{3n-1}{2}-i-j+1)}{2}}(\lambda^{\frac{n-1}{2}})^{\frac{3n-1}{2}-i-j}\\
&+(-1)^{i(j-1)}\lambda^j(-1)^{\frac{(\frac{3n-1}{2}-n-i+j)(\frac{3n-1}{2}-n-i+j+1)}{2}}(\lambda^{\frac{n-1}{2}})^{\frac{3n-1}{2}-n-i+j}.
\end{align*}
We first check that the signs of the two summands are opposite, we need the following congruence to be true
\begin{align*}
&ij+\frac{(\frac{3n-1}{2}-i-j)(\frac{3n-1}{2}-i-j+1)}{2}\equiv\\ &i(j-1)+\frac{(\frac{3n-1}{2}-n-i+j)(\frac{3n-1}{2}-n-i+j+1)}{2}+1\;(\mathrm{mod}\;2),
\end{align*}
this congruence is equivalent to
\begin{align*}
&\frac{(\frac{3n-1}{2}-i-j)(\frac{3n-1}{2}-i-j+1)-(\frac{3n-1}{2}-n-i+j)(\frac{3n-1}{2}-n-i+j+1)}{2}\equiv\\ &-i+1\;(\mathrm{mod}\;2),
\end{align*}
simplifying the left side yields
$
2ji-2jn-in+n^2\equiv -i+1\;(\mathrm{mod}\;2),
$
which is true since $n$ is odd.

Now we check that the power of $\lambda$ is the same; for that to be true we need
\[
(\frac{n-1}{2})(\frac{3n-1}{2}-i-j)\equiv j+(\frac{n-1}{2})(\frac{3n-1}{2}-n-i+j)\;(\mathrm{mod}\;n),
\]
since $n$ is odd $2$ is invertible, hence multiplying by $4$ we get
\[
(n-1)(3n-1-2i-2j)\equiv 4j+(n-1)(3n-1-2n-2i+2j)\;(\mathrm{mod}\;n),
\]
which is equivalent to
$1+2i+2j\equiv 4j+1+2i-2j\;(\mathrm{mod}\;n)$,
which is true.

The second equation in Lemma $\ref{multi}$
can be checked with a similar computation.
The above computations show that one cannot solve for all generators of degree $3n$ using lower degree invariants, as the system of equations needed to solve for these orbit sum invariants of degree $3n$ has rank less than the number of variables, since we have shown that kernel of the coefficient matrix  has a nontrivial element.
 Hence invariants of degree $\leq 2n$ do not generate all invariants of degree $3n$, and  $\beta(g)=3n \gneq |G|=2n$. 
\end{proof}
\end{theorem}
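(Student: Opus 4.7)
The plan is to prove the two inequalities $\beta(g) \le 3n$ and $\beta(g) \ge 3n$ separately. For the upper bound, I would induct on $k$ to show every orbit sum $\mathcal{O}(kn-i,i)$ of degree $kn \ge 4n$ lies in the subalgebra generated by invariants of degree $\le 3n$. The essential new difficulty relative to the $n$ even case is that $u^n v^n$ is \emph{not} invariant when $n$ is odd, because $g.(u^n v^n) = \lambda^n v^n u^n = -u^n v^n$, so a degree $2n$ replacement invariant must be built. My plan is to compute both $(u^n+v^n)(u^{3n}+v^{3n})$ and $(u^{3n}+v^{3n})(u^n+v^n)$, observe that the anti-commuting cross terms $\pm(u^{3n}v^n - u^n v^{3n})$ cancel on addition, and so recover $u^{4n}+v^{4n}$ as a polynomial in the invariants $u^n+v^n$ and $u^{3n}+v^{3n}$; then use $(u^{2n}+v^{2n})^2 = u^{4n}+v^{4n}+2u^{2n}v^{2n}$ (valid since $2n$ is even, so $u^{2n}$ and $v^{2n}$ commute) to solve for $u^{2n}v^{2n}$.

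With $u^{2n}v^{2n}$ in hand, I would complete the induction for $a = \mathcal{O}(kn-i,i)$ by splitting on $i$. If $i \ge 2n$, factor $u^{2n}v^{2n}$ out of $a$, exposing an orbit sum of degree $(k-4)n$ handled inductively. If $i < 2n$, use an identity of the form $(u^{2n}+v^{2n})\mathcal{O}((k-2)n-i,i) = \mathcal{O}(kn-i,i) + \mathcal{O}((k-2)n-i,\,2n+i)$, in which the left factor has degree $(k-2)n$ (induction) and the trailing orbit sum has second entry $\ge 2n$, falling under the previous case.

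For the lower bound, the strategy is to exhibit an obstruction already in degree $3n$. Since invariants occur only in degrees that are multiples of $n$, and $3n = n+2n$ is the only partition into summands $\le 2n$, any invariant of degree $3n$ generated by invariants of degree $\le 2n$ is a $\kk$-linear combination of the products $\mathcal{O}(2n-i,i)\mathcal{O}(n-j,j)$ and $\mathcal{O}(n-j,j)\mathcal{O}(2n-i,i)$. Lemma~\ref{multi} expresses each such product as a combination of exactly two orbit sums $\mathcal{O}(3n-p,p)$, so the span of these products is the image of a linear map whose matrix I must show has nontrivial kernel. My plan is to propose the explicit vector $x_k = (-1)^{\frac{(\frac{3n-1}{2}-k)(\frac{3n-1}{2}-k+1)}{2}}(\lambda^{\frac{n-1}{2}})^{\frac{3n-1}{2}-k}$ and verify it solves both families of equations from Lemma~\ref{multi}; this reduces to a sign congruence mod $2$ and a $\lambda$-exponent congruence mod $n$ in each of the two cases $i-j < n/2$ and $i-j > n/2$. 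Both reductions are elementary given that $n$ is odd (so $2$ is invertible mod $n$ and the parity arithmetic collapses). The main obstacle will be arriving at the correct ansatz for $x_k$ and then discharging both case dichotomies of Lemma~\ref{multi} uniformly.
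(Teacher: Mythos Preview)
Your proposal is correct and follows essentially the same route as the paper: the upper bound via building $u^{2n}v^{2n}$ from $(u^n+v^n)(u^{3n}+v^{3n})+(u^{3n}+v^{3n})(u^n+v^n)$ and $(u^{2n}+v^{2n})^2$, then the $i\ge 2n$ / $i<2n$ split; and the lower bound via the same explicit kernel vector $x_k$ verified against the two cases of Lemma~\ref{multi} by a sign congruence mod $2$ and a $\lambda$-exponent congruence mod $n$. (Minor wording slip: the ``replacement invariant'' $u^{2n}v^{2n}$ has degree $4n$, not $2n$.)
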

\section{A cyclic group acting on a graded down-up algebra}
The down-up algebras were introduced by Benkart and Roby in \cite{BR}, as a generalization of the universal enveloping algebra of some three-dimensional Lie algebras.  Let $\alpha$ and $\beta$ be fixed elements of a field $\kk$.  The graded down-up algebras $A(\alpha, \beta)$ are the algebras generated over $\kk$ generated by two elements $u, d$ with relations
$$d^2u = \alpha dud + \beta ud^2 \quad \text{ and } \quad 
du^2 = \alpha udu + \beta u^2d.$$
These algebras have a monomial basis of the form
$u^a (du)^b d^c$, and we define the degree of
this monomial to be the total degree in $u$ and $d$, i.e. $a + 2b +c$.
The down-up algebra $A(\alpha, \beta)$ is  noetherian if and only if it is Artin-Schelter regular if and only if $\beta \neq 0$ \cite{KMP}. Again let $ \lambda = e^{2\pi\i/n}$ be a primitive $n$th root of unity. Denote
by $g$ the automorphism of $A(\alpha, \beta)$
with $g.u = d$ 
and $g.d = \lambda u $, and let
 $G:=(g)$ be the cyclic group of order $2n$ generated by $g$. The graded automorphism groups of $A(\alpha, \beta)$ were computed in \cite[Proposition 1.1]{KK}, and the graded down-up algebras on which $g$ acts are  $A(0,1), A(0,-1)$ and $A(2,-1)$.
 As in the previous section, any invariant under $G$ must have degree a multiple of $n$.
Let $\mathcal{O}(u^a(du)^b d^c):=\sum_k g^k.(u^a(du)^b d^c)$ denote the orbit sum of the monomial $u^a(du)^b d^c$.
Orbits of monomials will consist of one or two summands;  when the monomial is invariant
$\mathcal{O}(u^a(du)^b d^c)$ will denote twice the orbit sum. 
Further, we will sometimes denote the orbit sum $\mathcal{O}(u^a(du)^b d^c)$ as $\mathcal{O}(a,b,c)$.  

In this section we consider the case $\alpha =0, \beta=1 $, so that the relations in $A = A(0,1)$ are $u^2 d = du^2$ and $d^2u = ud^2$.
We show that
\begin{align*}
\beta(g) = \begin{cases} 2n &\text{ if } n \equiv 0 \mod{2}\\
                         3n & \text{ if } n \equiv 1 \mod{2},
                         \end{cases}
                         \end{align*}
                         and hence in the case that $n$ is odd the Noether bound does not hold.
\subsection{Case $n$ even} We begin by proving an upper bound on $\beta(g)$ when $n$ is even.

\begin{proposition}
If $n \equiv0 \mod 2$ then $\beta(g) \leq |G|=2n$.
\end{proposition}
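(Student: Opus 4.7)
The plan is to induct on $k \geq 3$, showing that every orbit sum $\mathcal{O}(u^\alpha(du)^\beta d^\gamma)$ of degree $\alpha+2\beta+\gamma = kn$ lies in the subalgebra generated by invariants of degree $\leq 2n$. As in Proposition~\ref{invar}, $g^2$ scales each basis monomial by $\lambda^{\alpha+2\beta+\gamma}$, so every homogeneous invariant sits in a degree divisible by $n$ and is a linear combination of such orbit sums, reducing the problem to generating each one.

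The relations $u^2d = du^2$ and $ud^2 = d^2u$ make $u^2$ and $d^2$ central, so for $n$ even the elements $u^n$ and $d^n$ are central, and $\lambda^n = 1$ forces
$$\theta_1 := u^n+d^n \in A^G_n, \qquad \theta_2 := u^nd^n \in A^G_{2n}.$$
Direct computation with $u^n, d^n$ central and $\lambda^n = 1$ yields the two identities that drive the argument:
\begin{align*}
\theta_2 \cdot \mathcal{O}(M) &= \mathcal{O}(u^n d^n M) \quad\text{for any monomial } M,\\
\theta_1 \cdot \mathcal{O}(u^{\alpha'}(du)^\beta d^{\gamma'}) &= \mathcal{O}(u^{\alpha'+n}(du)^\beta d^{\gamma'}) + \mathcal{O}(u^{\alpha'}(du)^\beta d^{\gamma'+n}).
\end{align*}

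I would then split the target orbit sum into three cases. If $\alpha \geq n$ and $\gamma \geq n$, the $\theta_2$ identity factors it and reduces the degree by $2n$, so induction closes the case. If exactly one of $\alpha,\gamma$ is at least $n$, say $\alpha \geq n$, the $\theta_1$ identity rearranges to
$$\mathcal{O}(u^\alpha(du)^\beta d^\gamma) = \theta_1 \cdot \mathcal{O}(u^{\alpha-n}(du)^\beta d^\gamma) - \mathcal{O}(u^{\alpha-n}(du)^\beta d^{\gamma+n});$$
the product term is handled by induction on its second factor, and the leftover orbit sum (now with $\gamma+n \geq n$) either falls into the first case or is reduced further by iterating this step until the $u$-exponent drops below $n$. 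If instead $\alpha < n$ and $\gamma < n$ then $\beta \geq n/2 + 1$ for $k \geq 3$, and I would invoke the additional degree-$n$ invariant $\mathcal{O}((du)^{n/2}) = (du)^{n/2} - (ud)^{n/2}$ (which is invariant because $\lambda^{n/2} = -1$) to establish a companion identity that lowers $\beta$ by $n/2$.

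The main obstacle I expect is this last case: computing $\mathcal{O}((du)^{n/2}) \cdot \mathcal{O}(u^\alpha(du)^{\beta-n/2}d^\gamma)$ requires moving $(du)^{n/2}$ and $(ud)^{n/2}$ past $u^\alpha$ and $d^\gamma$ using centrality of $u^2, d^2$, and the resulting signs depend on $\alpha$, $\gamma$, and the parity of $n/2$, so the subcases $n \equiv 2 \pmod 4$ and $n \equiv 0 \pmod 4$ may need to be tracked separately. If the product expands, as expected, into the target orbit sum plus another orbit sum already covered by induction or by the earlier cases, the induction closes and $\beta(g) \leq 2n$.
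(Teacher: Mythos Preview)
Your overall framework (induct on $k$, use the central invariants $\theta_1 = u^n + d^n$ and $\theta_2 = u^nd^n$, and finish with an $(du)^{n/2}$-trick) matches the paper's in spirit, but your Case~2 has a genuine gap.  Suppose $n \le \alpha < 2n$ and $\gamma < n$.  Your identity gives
\[
\mathcal{O}(u^\alpha (du)^\beta d^\gamma) \;=\; \theta_1 \cdot \mathcal{O}(u^{\alpha-n}(du)^\beta d^\gamma) \;-\; \mathcal{O}(u^{\alpha-n}(du)^\beta d^{\gamma+n}),
\]
and the leftover has $u$-exponent $\alpha-n<n$ and $d$-exponent $\gamma+n\geq n$, so it sits in the symmetric instance of Case~2.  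Applying the same step to it (subtracting $n$ from the $d$-exponent) returns you exactly to the original orbit sum: the ``iteration'' you describe is a two-cycle, not a descent.  You have produced one linear relation between the two degree-$kn$ orbit sums $\mathcal{O}(\alpha,\beta,\gamma)$ and $\mathcal{O}(\alpha-n,\beta,\gamma+n)$, but you need a second, independent relation to solve for either.  Your Case~3 does not cover this leftover either, since there one of the outer exponents is still $\ge n$.

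The paper closes exactly this gap, and it requires substantially more than $\theta_1$, $\theta_2$, and $\mathcal{O}((du)^{n/2})$.  For instance, when $b=0$ and $0<c<n$ the paper multiplies a degree-$(k-2)n$ orbit sum by the degree-$2n$ invariant $\mathcal{O}(u^{2n-c}d^c)$, not by a degree-$n$ element; the resulting extra term then lands in the range $a\ge n,\, c\ge n$ where $\theta_2$ finishes it.  When $a,b,c$ are all nonzero the paper splits further on the size of $b$ relative to $n$ and $n/2$ and on the parities of $b$ and $c$, in each subcase choosing a carefully matched pair of orbit sums (one of degree $(k-1)n$, one of degree $n$) whose product hits the target plus a correction already handled.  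So the missing ingredient in your sketch is a second family of relations, coming from degree-$2n$ orbit sums other than $u^nd^n$ (or equivalently from more exotic degree-$n$ products), together with a finer case analysis on $(a,b,c)$ than the trichotomy on whether $a,c\ge n$.
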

\begin{proof}
First, we show that $\beta(g) \leq2n$. We know that $u^n d^n$ is an invariant of degree $2n$, because $g.u^n d^n=d^n u^n=u^n d^n$. 
We prove by induction on $k$ that invariants 
of degree $ kn$ for $k \geq 3$
can be degenerated by invariants of
degree $\leq 2n$.
Let $\mathcal{O}(u^a(du)^bd^c)$ be an orbit
sum of degree $a + 2b +c = kn$ for $k \geq 3$.\\
\underline{\bf Case 1}: $a\geq n$ and $c\geq n$.  Then
 $\mathcal{O}(u^{a-n}(du)^b d^{c-n})\mathcal{O}(u^n d^n) = 
\mathcal{O}(u^a(du)^b d^c)$, so that
$\mathcal{O}(u^a(du)^b d^c)$ can be generated by two invariants of lower degree.\\
\underline{\bf Case 2}: $a\geq 2n$ (or by symmetry $c\geq 2n$). Then
 $\mathcal{O}(u^{a-n}(du)^b d^{c})\mathcal{O}(u^n) = 
\mathcal{O}(u^a(du)^b d^c)+\mathcal{O}(u^{a-n}(du)^b d^{c+n})$ and 
the second invariant on the right-hand side of the equation belongs to Case 1 because $(a-n) \geq n$ and $(c+n) \geq n$.  Hence
$\mathcal{O}(u^a(du)^b d^c)$ can be generated by two invariants of lower degree. \\
\underline{\bf Case 3}: $b=0$. If $0<c<n$ then
 \[
\mathcal{O}(u^{(k-2)n} )\mathcal{O}(u^{2n-c} d^c) = \mathcal{O}(u^{kn-c} d^c) + \mathcal{O}(u^{2n-c}d^{(k-2)n + c}),
\]
and the second invariant on the right-hand side belongs to Case 1. Hence
$\mathcal{O}(u^{kn-c} d^c)$ can be generated by two invariants of lower degree, and by symmetry this is true if $0 < a < n$.

If $b=0$ and $c \geq n$, then either $a \geq n$ and we are done by Case 1 or $a <n$ and we are done by symmetry as noted above.\\
\underline{\bf Case 4}: When $c=0$ (and by symmetry when $a=0$).\\
\underline{\bf Case 4.1}: When $c=0$ and $n \leq b$. Then
$\mathcal{O}(u^{kn-2b} (du)^{b-n/2} )\mathcal{O}((du)^{n/2}) = \mathcal{O}(u^{kn-2b} (du)^b)+ \lambda^{n/2}\mathcal{O}(u^{(k+1)n-2b}(du)^{b-n} d^n )$, where the second invariant on the right-hand side belongs to Case 1.  Hence $\mathcal{O}(u^{kn-2b} (du)^b)$ can be generated by two invariants of lower degree.\\
\underline{\bf Case 4.2}: $c=0$ and $n/2 \leq b < n \leq kn/2$. Then
$$\mathcal{O}(u^{kn-2b}(du)^{b-n/2})\mathcal{O}((du)^{n/2})= \mathcal{O}(u^{kn-2b}(du)^b) + \lambda^{n/2}\mathcal{O}(u^{(k-1)n + 1}(du)^{n-b-1} d^{2b-n+1})$$ where the second invariant on the  belongs to Case 2, since $(kn-n)+1 \geq 2n$.
Then $\mathcal{O}(u^{kn-2b} (du)^b)$ can be generated by two invariants of lower degree:\\
\underline{\bf Case 4.3}: $c=0$ and  $b < n/2$.  Then 
if $b$ is even:
$$\mathcal{O}(u^{(k-1)n-b}(du)^{b/2} )\mathcal{O}(u^{n-b} (du)^{b/2}) = \mathcal{O}(u^{kn-2b} (du)^b) + \lambda^{b/2}\mathcal{O}(u^{(k-1)n} d^n)$$
and if $b$ is odd:
$$\mathcal{O}(u^{(k-1)n-b+1}(du)^{\frac{b-1}{2}})\mathcal{O}(u^{n-b-1} (du)^\frac{b+1}{2}) = \mathcal{O}(u^{kn-2b} (du)^b) + \lambda^{\frac{b+1}{2}}\mathcal{O}(u^{kn-n+1} d^{n-1}).$$
In both cases, the second invariant on the right-hand side belongs to Case 2 and
$\mathcal{O}(u^{kn-2b} (du)^b)$ can be generated by two invariants of lower degree.\\
\underline{\bf Case 5}:  $a,b,c \neq 0$.\\
\underline{\bf Case 5.1}: $b \geq n$. Then :\[
\begin{cases}
\mathcal{O}(u^a (du)^{b- \frac{n}{2}} d^c )\mathcal{O}((du)^{n/2}) = \mathcal{O}(u^a (du)^b d^c) + \lambda^{\frac{n}{2}}\mathcal{O}((u^{a+n} (du)^{b-n} d^{c+n})\quad\mathrm{if}\;c\;\mathrm{even}\\
\mathcal{O}(u^a (du)^{b- \frac{n}{2}} d^c)\mathcal{O}(u (du)^{\frac{n}{2}-1} d) = \mathcal{O}(u^a (du)^b d^c) + \lambda^{\frac{n}{2}}\mathcal{O}(u^{a+n}(du)^{b-n}d^{c+n})\quad\mathrm{if}\;c\;\mathrm{odd}.\\
\end{cases}
\]
In both cases, the second invariant on the right-hand side belongs to Case 1, and $\mathcal{O}(u^a (du)^b d^c)$ can be generated by two invariants of lower degree.\\
\underline{\bf Case 5.2}:  $b < n$. We know that with both $a \geq n$ and $c \geq n$, the case falls into case (1). Hence we assume $a < n$, and hence $(2b+c)>2n$, so $(b+c)>n$, and there are 4 subcases as shown below.
If $b$ and $c$ are both even:
$$\mathcal{O}(u^a (du)^\frac{b}{2} d^{b+c-n} )\mathcal{O}((du)^{b/2} d^{n-b}) = \mathcal{O}(u^a (du)^b d^c) + \lambda^{-\frac{b}{2}}\mathcal{O}(u^{a+n} d^{2b+c-n} ).$$
If $b$ is odd and $c$ is even:
$$\mathcal{O}(u^a (du)^\frac{b-1}{2} d^{c+b+1-n})\mathcal{O}((du)^\frac{b+1}{2} d^{n-b-1}) = \mathcal{O}(u^a (du)^b d^c)+\lambda^{-\frac{b+1}{2}} \mathcal{O}(u^{a+n-1} d^{2b+c-n+1}).$$
If $b$ is even and $c$ is odd:
$$\mathcal{O}(u^a (du)^\frac{b}{2} d^{c+b-n})\mathcal{O}(u(du)^{\frac{b}{2}-1} d^{n-b+1}) = \mathcal{O}(u^a (du)^b d^c)+ \lambda^{-\frac{b}{2}} \mathcal{O}(u^{a+n} d^{2b+c-n}).$$
If $b$ and $c$ are both odd (note that $c + 2b-n+1<2n-n+1$ and $a \geq 1$):
$$\mathcal{O}(u^a(du)^{\frac{b-1}{2}}d^{b+c-n+1}) \mathcal{O}(u(du)^{\frac{b-1}{2}}d^{n-b}) = \mathcal{O}(u^a(du)^bd^c) + \lambda^{-\frac{b-1}{2}}\mathcal{O}(u^{a+n-1}d^{c+2b -n+1})$$
In all four cases we are done by Case 3, and by symmetry we are done if $c< n$, completing the proof of Case 5.2.
\end{proof}
From now on we will denote the element $\mO(u^{kn-2i-j}(du)^id^j)$ as $\mathcal{O}(kn-2i-j,i,j)$. We fix a basis for the vector space generated by the orbit sums of degree $2n$. The orbit sum $\mathcal{O}(2n-2i-j,i,j)$ is a basis element if $n>i+j$ or if $n=i+j$ and $i$ is even. If an orbit sum is represented by a triple not satisfying the conditions above, we can switch it to one that does using the following formula
\begin{equation} \label{nEvenBasis}
\mathcal{O}(2n-2i-j,i,j)=\lambda^{i+j}\begin{cases}\mathcal{O}(j-1,i+1,2n-2i-j-1)\quad j\;\mathrm{odd}\\ \mathcal{O}(j+1,i-1,2n-2i-j+1)\quad j\;\mathrm{even}\;i\neq0\\ \mathcal{O}(j,0,2n-2i-j)\quad\hphantom{-2i-j+1} j\mathrm{\;even}\;i=0.\end{cases}
\end{equation}

\begin{proposition} \label{nEvenOnTimesOn}
The product of the orbit sums $\mathcal{O}(n-2i-j,i,j)\mathcal{O}(n-2p-q,p,q)$ is: if $j$ and $q$ are even then the product is
\begin{align*}
&\mathcal{O}(2n-2i-j-2p-q,p+i,q+j)+\\&\lambda^{p+q}\begin{cases}\mathcal{O}(n-j+q+1,p-i-1,n+2i+j-2p-q+1)\quad p>i\\\mathcal{O}(n-2i-j+q+2p,i-p,n-q+j)\quad\hphantom{2p-q+1}\;\;\; p\leq i.\end{cases}
\end{align*}
If $j$ is even and $q$ is odd then the product is 
\begin{align*}
&\lambda^{p+q}\mathcal{O}(n-2i-j+q-1,i+p+1,n-2p-q+j-1)+\\&\begin{cases}\mathcal{O}(2n-j-2p-q,p-i,2i+j+q)\quad \hphantom{j+q+1)}\;\;\;p\geq i\\\mathcal{O}(2n-2i-j-q+1,i-p-1,2p+j+q+1)\quad p<i.\end{cases}
\end{align*}
If $j$ is odd and $q$ is even then the product is
\begin{align*}
&\lambda^{p+q}\mathcal{O}(n+q-2i-j,i+p,n+j-2p-q)+\\&\begin{cases}\mathcal{O}(2n-j-2p-q+1,p-i-1,q+2i+j+1)\quad p>i\\\mathcal{O}(2n-2i-j-q,i-p,2p+q+j)\quad \hphantom{+2i+j+1}p\leq i.\end{cases}
\end{align*}
If $j$ and $q$ are both odd then the product is
\begin{align*}
&\mathcal{O}(2n-2i-j-2p-q-1,i+p+1,q+j-1)+\\&\lambda^{p+q}\begin{cases}\mathcal{O}(n+q-j,p-i,n+2i+j-2p-q)\quad \hphantom{q+j+1)+}p\geq i\\\mathcal{O}(n-2i-j+2p+q+1,i-p-1,n-q+j+1)\quad p< i.\end{cases}
\end{align*}
\end{proposition}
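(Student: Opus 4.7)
The plan is to prove the formula by direct monomial computation, split into four cases according to the parities of $j$ and $q$. Expanding each orbit sum using the $g$-action formula $g.(u^a(du)^bd^c) = \lambda^{b+c}d^a(ud)^bu^c$, I write
\[
\mathcal{O}(n-2i-j,i,j) = u^{n-2i-j}(du)^id^j + \lambda^{i+j}d^{n-2i-j}(ud)^iu^j,
\]
and similarly for $\mathcal{O}(n-2p-q,p,q)$. Multiplying out produces four monomial terms; since $g$ is an algebra automorphism and $\lambda^n=1$, two of them form the $g$-orbit of the ``aligned'' product $T = u^{n-2i-j}(du)^id^j\cdot u^{n-2p-q}(du)^pd^q$ and the other two form the $g$-orbit of the ``crossed'' product $M = u^{n-2i-j}(du)^id^j\cdot d^{n-2p-q}(ud)^pu^q$. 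Hence the whole product collapses to an expression of the form $\mathcal{O}(T) + \lambda^{p+q}\mathcal{O}(M)$, and what remains is to normalize $T$ and $M$ to basis form and, if necessary, convert to the canonical representative via \eqref{nEvenBasis}.

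For the normalization I would use that $u^2$ and $d^2$ are central in $A(0,1)$, together with the derived identities
\[
(du)u = u^2d,\qquad d(du) = ud^2,\qquad (du)^bu = u^2(du)^{b-1}d,\qquad d(du)^b = u(du)^{b-1}d^2,
\]
and the contraction $(du)(ud) = (ud)(du) = u^2d^2$. The parity of $j$ decides whether $d^j$ slides freely past the second factor (even case) or peels off a single $d$ that combines with an adjacent $u$ into a new $(du)$-factor, shifting indices by $\pm 1$; the parity of $q$ plays the analogous role on the right in both $T$ and $M$. In $M$ the block $(du)^i(ud)^p$ contracts via the central element $u^2d^2$ down to a residual $(du)^{i-p}$ when $p\le i$ or $(ud)^{p-i} = u(du)^{p-i-1}d$ when $p\ge i$; the latter representation fails the basis condition of \eqref{nEvenBasis}, so an explicit conversion is required, producing the sub-case $p>i$ (or $p\ge i$) listed in the statement, while in the former case the triple is already canonical.

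I would carry out the case $j,q$ both even in full because it is the cleanest — every sliding move is free, and $T$ reduces immediately to $u^{2n-2i-j-2p-q}(du)^{i+p}d^{j+q}$ while $M$ reduces to $u^{n-2i-j}(du)^i(ud)^pu^qd^{n+j-2p-q}$ before contraction — and then present the other three parity cases by the same template, noting that each odd exponent forces exactly one peel-off step contributing the expected index shift and the extra $(du)$-factor in the stated formulas. The main obstacle is pure bookkeeping: four parity cases, two sub-cases each, with careful tracking of $\lambda$-powers — particularly where $\lambda^n = 1$ absorbs the phase $\lambda^{n-p-q}$ arising from the $g$-action on $d^{n-2p-q}$, and where applying \eqref{nEvenBasis} introduces an extra $\lambda^{i'+j'}$ that must be reconciled with the outer $\lambda^{p+q}$.
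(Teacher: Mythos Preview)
Your approach is correct and essentially identical to the paper's: expand each orbit sum as a two-term sum, multiply to obtain four monomials, pair them as the orbits of the ``aligned'' term $T$ and the ``crossed'' term $M$, and then reduce each to canonical $u^a(du)^bd^c$-form using centrality of $u^2,d^2$ and the contraction $(du)^i(ud)^p = u^{2\min(i,p)}d^{2\min(i,p)}(du\text{ or }ud)^{|i-p|}$. One small point: the $p>i$ versus $p\le i$ split arises purely from this monomial contraction (converting residual $(ud)^{p-i}$ to $u(du)^{p-i-1}d$), not from the basis-change formula \eqref{nEvenBasis}; the latter is invoked only in the \emph{next} theorem when one needs the triples in canonical form to set up the linear system, whereas the proposition itself records the product without that normalization.
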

\begin{proof}
We show the case $j,q$ even and $p\geq i$, the remaining cases are similar. The product
$u^{n-2i-j}(du)^id^ju^{n-2p-q}(du)^pd^q$
is equal to $u^{2n-2i-j-2p-q}(du)^{i+p}d^{j+q}$ since the powers of $u$ and $d$ are central. This, together with the product
\[
\lambda^{i+j}d^{n-2i-j}(ud)^iu^j\cdot\lambda^{p+q}d^{n-2p-q}(ud)^pu^q,
\]
is equal to $\mathcal{O}(2n-2i-j-2p-q,p+i,q+j)$. The product
\begin{equation}\label{OrbitSum2n}
u^{n-2i-j}(du)^id^j\cdot\lambda^{p+q}d^{n-2p-q}(ud)^pu^q
\end{equation}
is equal to $\lambda^{p+q}u^{2n-2i-j+q}(du)^i(ud)^pd^{n-2p-q+j}$. Since $p>i$ we have
\[
(du)^i(ud)^p=u^{2i}d^{2i}(ud)^{p-i},
\]
therefore, since $u^{2i}$ and $d^{2i}$ are central, equation \eqref{OrbitSum2n} reduces to
\[
\lambda^{p+q}u^{n-j+q}(ud)^{p-i}d^{n+2i+j-2p-q}=\lambda^{p+q}u^{n-j+q+1}(du)^{p-i-1}d^{n+2i+j-2p-q+1}.
\]
This, together with the remaining product, form the desired orbit sum.
\end{proof}
Next we prove the lower bound on $\beta(g)$ when $n$ is even.
\begin{theorem}
If $n$ is even then $\beta(g)=2n$.
\end{theorem}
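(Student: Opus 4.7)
The upper bound $\beta(g)\leq 2n$ was established in the preceding proposition, so only the lower bound $\beta(g)\geq 2n$ remains. Since every nonzero homogeneous invariant has degree a multiple of $n$, the only way to build an element of $A^G_{2n}$ as a product of two positive-degree invariants is as a product of two invariants of degree $n$. Hence it suffices to show that the multiplication map
\[
\mu\colon A^G_n\otimes A^G_n\longrightarrow A^G_{2n}
\]
is not surjective.

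My plan is to reduce this to an explicit rank computation. A basis for $A^G_n$ is given by the orbit sums $\mathcal{O}(n-2i-j,i,j)$ with $(i,j)$ ranging over an index set obtained from the $k=1$ version of \eqref{nEvenBasis}, and a basis for $A^G_{2n}$ is given by \eqref{nEvenBasis}. By Proposition~\ref{nEvenOnTimesOn}, every product of two basis elements of $A^G_n$ is a linear combination of exactly two orbit sums of degree $2n$; applying \eqref{nEvenBasis} to each of those two summands expresses the product in the chosen basis of $A^G_{2n}$. Assembling these expansions as the rows of a coefficient matrix $M$ whose columns are indexed by the basis of $A^G_{2n}$, the surjectivity of $\mu$ is equivalent to $M$ having full column rank.

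To obstruct surjectivity, I would exhibit a nonzero vector in the left kernel of $M$, i.e.\ a nonzero linear functional on $A^G_{2n}$ that vanishes on every product of two degree-$n$ orbit sums. Guided by the kernel vector used in Theorem~\ref{n0mod4}, I would try an ansatz of the form $y_{a,b,c}=\varepsilon(a,b,c)\,\lambda^{\psi(a,b,c)}$, where $\varepsilon(a,b,c)\in\{\pm 1\}$ is determined by a half-integer quadratic in the exponents and $\psi(a,b,c)$ is linear in the exponents, adapted to the three-parameter grading $(a,b,c)$ of the down-up algebra. Substituting this ansatz into each of the four parity cases of Proposition~\ref{nEvenOnTimesOn} (according to the parities of $j$ and $q$) together with their subcases ($p\geq i$ versus $p<i$) reduces the required vanishing to two families of congruence checks: one modulo $2$ for the sign and one modulo $n$ for the $\lambda$-power, of the same flavor as those carried out in Theorem~\ref{n0mod4}.

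The main obstacle is bookkeeping. The additional generator $du$ of $A(0,1)$ adds a third grading variable $b$, which enlarges both the basis of $A^G_{2n}$ and the number of subcases in the multiplication formula, and the index shifts imposed by \eqref{nEvenBasis} interact nontrivially with the subcases of Proposition~\ref{nEvenOnTimesOn}. The ansatz must be chosen so that, after normalization, the coefficient of the kernel vector on the second summand in each product cancels the coefficient on the first summand. Once a compatible ansatz is verified in all parity cases, the existence of a single nonzero element in the left kernel of $M$ forces $\dim\mathrm{Im}(\mu)<\dim A^G_{2n}$, which, combined with the upper bound, yields $\beta(g)=2n$.
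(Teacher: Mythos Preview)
Your plan matches the paper's proof exactly: reduce to showing that degree-$n$ products do not span $A^G_{2n}$ by exhibiting a nonzero functional on $A^G_{2n}$ that annihilates every product from Proposition~\ref{nEvenOnTimesOn}, then verify this functional case by case via congruences on the $\lambda$-exponent. The paper's actual kernel vector turns out to be simpler than your ansatz suggests---no sign factor is needed, one takes $x_{l,k}=\lambda^{-\lfloor (n-l-k)/2\rfloor}$---and with your row/column convention the relevant null space is the \emph{right} kernel of $M$, not the left.
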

\begin{proof}
It suffices to show that $\beta(g)\geq 2n$, i.e. the orbit sums of degree $2n$ cannot be all generated by orbit sums of degree $n$. In the formulae in Proposition \ref{nEvenOnTimesOn} we replace the orbit sum $\mathcal{O}(2n-2l-k,l,k)$ with the variable $x_{l,k}$, keeping in mind that the triple may need to be changed using \eqref{nEvenBasis} if it is not in the desired form. We also change the product $\mathcal{O}(n-2i-j,i,j)\mathcal{O}(n-2p-q,p,q)$ with zero. This gives rises to a linear system of homogeneous equations. To prove the theorem we prove that this system has a nonzero solution. We claim that the vector $x_{l,k}=\lambda^{-\lfloor\frac{n-l-k}{2}\rfloor}$ is a nonzero solution. We check the case $j,q$ even, $p\geq i$, $p\equiv i\;(\mathrm{mod}\;2)$ and no changes to the triple occurred, the remaining cases are checked similarly. We need to check that
$x_{p+i,q+j}+\lambda^{p+q}x_{p-i-1,n+2i+j-2p-q+1}=0.
$
Under the hypothesis $p\equiv i\;(\mathrm{mod}\;2)$ this yields
$
\lambda^{-\frac{n-p-i-q-j}{2}}+\lambda^{\frac{i+j+p+q}{2}}=0,$
which is true since $\lambda^{\frac{n}{2}}=-1$.
\end{proof}

\subsection{Case $n$ odd}  We next show that when $n$ is odd $\beta(g) = 3n$, and hence the Noether bound does not hold.  

We fix a basis for the orbit sums of degree $3n$ as follows: $\mO(3n-2l-k,l,k)$ is a basis element if $3n>2(l+k)$. If a triple does not have this form, then it can be changed according to the following formula:
\begin{equation}\label{nOddBasis}
\mO(3n-2l-k,l,k)=\lambda^{l+k}\mO(k,l,3n-2l-k).
\end{equation}

\begin{proposition} \label{nOddOnTimesO2n}
The product of the orbit sums $\mathcal{O}(n-2i-j,i,j)\mO(2n-2p-q,p,q)$ is: if $j$ is even and $q$ is odd then the product is
\begin{align*}
&\lambda^{p+q}\mO(n-2i-j+q-1,i+p+1,2n-2p-q+j-1)+\\&\begin{cases}\mathcal{O}(3n-j-2p-q,p-i,q+j+2i)\quad \hphantom{+2p+q+}\;\;\;p\geq i\\\mathcal{O}(3n-2i-j-q+1,i-p-1,j+2p+q+1)\quad p< i.\end{cases}
\end{align*}
If $j$ and $q$ are both odd then the product is
\begin{align*}
&\mO(3n-2i-j-2p-q-1,i+p+1,j+q-1)+\\&\lambda^{p+q}\begin{cases}\mathcal{O}(n-j+q,p-i,2n-2p-q+j+2i)\quad \hphantom{+2p+q+}\;\;\;p\geq i\\\mathcal{O}(n-2i-j+q+2p+1,i-p-1,2n-q+j+1)\quad p< i.\end{cases}
\end{align*}
If $j$ and $q$ are both odd then the product is
\begin{align*}
&\mO(3n-2i-j-2p-q,i+p,j+q)+\\&\lambda^{p+q}\begin{cases}\mathcal{O}(n-2i-j+1+2p,i-p,2n+j-q)\quad \hphantom{+2p+q+}\;\;\;p\leq i\\\mathcal{O}(n-j+q+1,p-i-1,2n+2i+j-2p-q+1)\quad p> i.\end{cases}
\end{align*}
If $j$ is odd and $q$ is even then the product is
\begin{align*}
&\lambda^{p+q}\mO(n-2i-j+q,i+p,2n-2p-q+j)+\\&\begin{cases}\mathcal{O}(3n-2i-j-q,i-p,q+j+2p)\quad \hphantom{+2p+q+}\;\;\;p\leq i\\\mathcal{O}(3n-2p-j-q+1,p-i-1,j+2i+q+1)\quad p> i.\end{cases}
\end{align*}
The product of the orbit sums $\mO(2n-2p-q,p,q)\mO(n-2i-j,i,j)$ is:
\begin{align*}
&\begin{cases}
 \mO(n-2i-j,i,j)\mO(2n-2p-q-1,p+1, q-1) \text{ if }q \text{ odd }\\
 \mO(n-2i-j,i,j) \mO(2n-q,0, q)\hphantom{-1,p+1,q-1)}\; \text{ if } q \text{ even and } p= 0\\
 \mO(n-2i-j, i, j)\mO(2n-2p-q+1, p-1, q+1) \text{ if } q \text{ even and } p \neq 0
 \end{cases}
 \end{align*}
\end{proposition}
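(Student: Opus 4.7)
The plan is to follow the template of Proposition \ref{nEvenOnTimesOn}: expand each orbit sum as a sum of two monomials, multiply out to get four cross-terms, simplify using the centrality of $u^2$ and $d^2$ in $A(0,1)$, and match the resulting monomials in pairs to recognize them as the claimed orbit sums of degree $3n$, normalizing via \eqref{nOddBasis} when needed.

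Concretely, since $\lambda^n = \lambda^{2n} = 1$ the orbit of any monomial of total degree $n$ or $2n$ has at most two elements, and using $g.u = d$, $g.d = \lambda u$ one computes
\[
\mathcal{O}(n-2i-j,i,j) = u^{n-2i-j}(du)^i d^j + \lambda^{i+j}\, d^{n-2i-j}(ud)^i u^j,
\]
\[
\mathcal{O}(2n-2p-q,p,q) = u^{2n-2p-q}(du)^p d^q + \lambda^{p+q}\, d^{2n-2p-q}(ud)^p u^q.
\]
The ``parallel'' cross-products such as $u^{n-2i-j}(du)^i d^j \cdot u^{2n-2p-q}(du)^p d^q$ collapse directly by centrality of the even powers of $u$ and $d$ to a monomial of the form $u^{\ast}(du)^{i+p} d^{\ast}$, and the two $g$-images pair up to form the corresponding orbit sum. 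The ``mixed'' cross-products require the identity $(du)^i(ud)^p = u^{2i}d^{2i}(ud)^{p-i}$ for $p \geq i$ (already used in the proof of Proposition \ref{nEvenOnTimesOn}) and its mirror $(du)^i(ud)^p = (du)^{i-p}u^{2p}d^{2p}$ for $i \geq p$, which collapse the interleaved $(du)$ and $(ud)$ strings into a single such string times a central factor.

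I would then split into the four parity combinations of $(j,q)$, each with the subcase $p \geq i$ vs.\ $p < i$. Tracking whether the ``leftover'' factor after applying the mixed identity is a single $u$ or a single $d$ determines whether the resulting triple lies in the natural shape $(\ast, i+p, \ast)$ or must be adjusted using $(ud)^k = u(du)^{k-1}d$ and its reverse, producing the $\pm 1$ shifts that appear in the statement. As in the proof of Proposition \ref{nEvenOnTimesOn}, I would write out one representative case in full, say $j$ even, $q$ odd, $p \geq i$, and note that the remaining cases are handled by the same manipulations. For the second family of identities, concerning $\mathcal{O}(2n-2p-q,p,q)\,\mathcal{O}(n-2i-j,i,j)$, I would observe that in $A(0,1)$ moving $\mathcal{O}(2n-2p-q,p,q)$ past $\mathcal{O}(n-2i-j,i,j)$ reindexes its triple by one position in the $(du)$-string, exchanging a unit of $u$ for a unit of $d$ or vice versa; this accounts for the three-way split $(p+1,q-1)$, $(p-1,q+1)$, or no change when $p=0$ and $q$ is even, stated in the proposition. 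With this reduction in hand the second family is a formal consequence of the first.

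The main obstacle is purely bookkeeping. No single step is deep: each is a direct application of the centrality of $u^2$ and $d^2$ together with one of the collapsing identities above. The difficulty lies in keeping track of the four parity cases and their subcases, the $\lambda^{p+q}$ prefactors, and whether the resulting triple lies in the chosen basis of degree-$3n$ orbit sums or must be normalized via \eqref{nOddBasis}. This is why the natural exposition is to present one representative case explicitly and defer the others to analogous computations.
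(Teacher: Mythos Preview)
Your proposal is correct and follows precisely the approach the paper takes: the paper's own proof consists of the single sentence that the argument is similar to the proof of Proposition~\ref{nEvenOnTimesOn} and is therefore omitted. Your plan---expand each orbit sum into its two monomial summands, multiply out the four cross-terms, use centrality of $u^2$ and $d^2$ together with the collapsing identity $(du)^i(ud)^p = u^{2i}d^{2i}(ud)^{p-i}$ (and its mirror), and present one representative parity case in detail---is exactly what the proof of Proposition~\ref{nEvenOnTimesOn} does, and your handling of the reversed products via reindexing the degree-$2n$ triple is likewise the intended reduction.
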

The proof of the previous proposition is similar to the proof of Proposition \ref{nEvenOnTimesOn} and therefore is omitted.

\begin{proposition} \label{dulowerboundnodd}
If $n$ is odd then $\beta(g) \geq 3n$.
\end{proposition}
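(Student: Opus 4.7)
The plan is to mirror the argument used for the skew polynomial ring in Theorem \ref{n1mod2}: I would show that the subspace of degree $3n$ invariants expressible as products of invariants of degree $\leq 2n$ is strictly smaller than the full space of degree $3n$ orbit sums. Since every invariant of $A(0,1)$ under $G$ lives in a degree divisible by $n$, any product of lower-degree invariants that has total degree $3n$ can be reduced (by grouping any triple of degree-$n$ factors into a product of a degree-$n$ and a degree-$2n$ factor) to a linear combination of expressions of the form $\mO(n-2i-j,i,j)\mO(2n-2p-q,p,q)$ or $\mO(2n-2p-q,p,q)\mO(n-2i-j,i,j)$, whose expansions are given in Proposition \ref{nOddOnTimesO2n}.

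Working in the basis of degree-$3n$ orbit sums $\{\mO(3n-2l-k,l,k) : 2(l+k) < 3n\}$ introduced just before \eqref{nOddBasis}, I would substitute a formal unknown $x_{l,k}$ for each basis vector and set each product from Proposition \ref{nOddOnTimesO2n} equal to zero, after using \eqref{nOddBasis} to normalize any output triple with $2(l+k) \geq 3n$. This yields a homogeneous linear system whose coefficient matrix encodes the inclusion of the span of products into the space of degree-$3n$ invariants; exhibiting a nonzero element of the kernel would then suffice, exactly as in the skew polynomial case.

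Guided by the kernel vector in Theorem \ref{n1mod2}, I would try an ansatz of the form $x_{l,k} = (-1)^{Q(l,k)} \lambda^{R(l,k)}$, with $Q$ a quadratic polynomial in $l,k$ and $R$ an affine expression having $(n-1)/2$ as a key coefficient. Verification would consist of substituting this vector into the four families of equations from Proposition \ref{nOddOnTimesO2n} (grouped by the parities of $j$ and $q$), handling the two subcases $p \geq i$ and $p < i$, and in each case checking a sign congruence modulo $2$ and an exponent congruence modulo $n$, each of which collapses because $n$ is odd. The very last family of products $\mO(2n-2p-q,p,q)\mO(n-2i-j,i,j)$ in Proposition \ref{nOddOnTimesO2n} is already reduced to a multiple of a product of the opposite order, so it contributes no new equations and only requires noting that the common scalar is nonzero.

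The main obstacle will be identifying the correct ansatz rather than checking it. The presence of the middle factor $(du)^b$ effectively doubles the number of indices, so $x_{l,k}$ is two-parameter rather than one-parameter as in Theorem \ref{n1mod2}, and the normalization via \eqref{nOddBasis} introduces an additional $\lambda^{l+k}$ twist whenever the product falls outside the chosen basis cell; this twist must be absorbed consistently into $R(l,k)$. Once the correct closed form is found, the congruence verifications parallel those appearing after equation \eqref{4NSysSol} and after the statement that $2ji - 2jn - in + n^2 \equiv -i+1 \pmod{2}$, and the resulting nonzero kernel vector immediately yields $\beta(g) \geq 3n$, as desired.
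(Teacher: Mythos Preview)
Your proposal follows exactly the paper's approach: replace degree-$3n$ orbit sums by variables $x_{l,k}$, set the products in Proposition \ref{nOddOnTimesO2n} to zero (after normalizing via \eqref{nOddBasis}), and exhibit a nonzero kernel vector of the form $(-1)^{Q(l,k)}\lambda^{R(l,k)}$, checking a sign congruence mod $2$ and an exponent congruence mod $n$ in each parity case. The paper supplies precisely the explicit formula you flag as the main obstacle, namely
\[
x_{l,k} = (-1)^{((l+k)(l+k+n) + l(l+1))/2}\,\lambda^{(n+1)(n+1+2l+2k)/4},
\]
and your observation that the reversed products $\mO(2n-2p-q,p,q)\mO(n-2i-j,i,j)$ yield no new equations is exactly how the paper handles that last block of Proposition \ref{nOddOnTimesO2n}.
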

\begin{proof}

We need to show that the orbit sums of degree $3n$ cannot be generated with orbit sums of lower degree. In the formulae in Proposition \ref{nOddOnTimesO2n} we replace the orbit sum $\mO(3n-2l-k,l,k)$ with the variable $x_{l,k}$, keeping in mind that the triple may need to be changed using \eqref{nOddBasis} if not in the desired form. We also change the product $\mathcal{O}(n-2i-j,i,j)\mathcal{O}(2n-2p-q,p,q)$ with zero. This gives rises to a linear system of homogeneous equations. To prove the proposition we prove that this system has a nonzero solution. We claim that the vector $$x_{l,k} = (-1)^{((l+k)(l+k +n) + l(l+1))/2}\lambda^{(n+1)(n+1+2l + 2k)/4}.$$ is a nonzero solution. We check the case $j$ even, $q$ odd, $p\geq i$ and no changes to the triple occurred, the remaining cases are checked similarly. We need to check that
\[
\lambda^{p+q}x_{i+p+1,2n-2p-q+j-1}+x_{p-i,q+j+2i}=0.
\]
We first prove that
\begin{align*}
&\frac{(2n+i+j-p-q)(3n+i+j-p-q)+(i+p+1)(i+p+2)}{2}\equiv\\&\frac{(p+q+i+j)(p+q+i+j+n)+(p-i)(p-i+1)}{2}+1\;(\mathrm{mod}\;2).
\end{align*}
Indeed, computing the left hand side minus the right hand side in $\mathbb{Z}$ yields
\[
j(2n-2p-2q)+3n^2+n(-3p-3q+2i)+p-2iq+2i,
\]
which is zero modulo 2. It remains to prove that
\begin{align*}
&p+q+\frac{(n+1)(n+1+2(i+p+1)+2(2n-2p-q+j-1)}{4}\equiv\\&\frac{(n+1)(n+1+2(p-i)+2(q+j+2i))}{4}\;(\mathrm{mod}\;n).
\end{align*}
Computing the left hand side minus the right hand side in $\mathbb{Z}$ yields $-n(-n+p+q-1)$ which is clearly zero modulo $n$.
\end{proof}

Next we prove that when $n$ is odd, then $\beta(g) \leq 4n$.
\begin{proposition}
\label{4nbound}
If $n \equiv1 \mod 2$ then $\beta(g) \leq 4n$.
\end{proposition}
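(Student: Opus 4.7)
The plan is to prove the bound by strong induction on $k$: show that every orbit sum of degree $kn$ with $k\geq 5$ lies in the subalgebra generated by invariants of degree $\leq 4n$. The crucial observation is that because $n$ is odd, $u^{2n}=(u^2)^n$ and $d^{2n}=(d^2)^n$ lie in the center of $A(0,1)$. A direct check shows that $u^{2n}+d^{2n}=\mathcal{O}(u^{2n})$ is a $G$-invariant of degree $2n$, and $u^{2n}d^{2n}$ is a $G$-invariant of degree $4n$ that is furthermore \emph{central} (indeed $g.(u^{2n}d^{2n})=d^{2n}u^{2n}=u^{2n}d^{2n}$). These two elements will play the reductive role that $u^n$ and $u^nd^n$ played in the even-$n$ proof. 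A secondary useful identity is $(du)(ud)=u^2d^2=(ud)(du)$, immediate from the relations $du^2=u^2d$ and $d^2u=ud^2$, which collapses products of ``interior'' $(du)^{b_1}(ud)^{b_2}$ factors into $(u^2d^2)^{\min(b_1,b_2)}$ times a leftover factor.

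Fix an orbit sum $\mathcal{O}(u^a(du)^bd^c)$ of degree $kn$, $k\geq 5$. The first case is $a\geq 2n$ and $c\geq 2n$: centrality of $u^{2n}d^{2n}$ lets us factor
\[
\mathcal{O}(u^a(du)^bd^c)=u^{2n}d^{2n}\cdot \mathcal{O}(u^{a-2n}(du)^bd^{c-2n}),
\]
a product of two invariants of degrees $4n$ and $(k-4)n$, and the right factor is handled by induction. The second case is that exactly one of $a,c$ is $\geq 2n$; by the $u\leftrightarrow d$ symmetry of the setup assume $a\geq 2n$, $c<2n$. Expanding $(u^{2n}+d^{2n})\cdot \mathcal{O}(u^{a-2n}(du)^bd^c)$ and using centrality of $u^{2n}$ and $d^{2n}$ gives
\[
(u^{2n}+d^{2n})\,\mathcal{O}(u^{a-2n}(du)^bd^c)=\mathcal{O}(u^a(du)^bd^c)+\mathcal{O}(u^{a-2n}(du)^bd^{c+2n}).
\]
The left-hand side is a product with one factor of degree $(k-2)n$, handled inductively. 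The ``error'' orbit sum on the right has exponents $(a-2n,c+2n)$: if $a\geq 4n$ it falls into the first case, while if $2n\leq a<4n$ it has first exponent $<2n$ and last exponent $\geq 2n$, a symmetric subcase resolved by an analogous identity applied on the $d$-side (possibly combined with one use of the degree-$4n$ central invariant $u^{2n}d^{2n}$), after which both exponents can be brought into the first case.

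The third case is $a<2n$ and $c<2n$. Then the degree constraint $a+2b+c=kn\geq 5n$ forces $2b\geq n+2$, so $b\geq (n+1)/2$; we must migrate this $(du)$-mass into $u$- or $d$-mass. Here we choose two orbit sums of degree $\leq 4n$ whose product, after applying the collapse $(du)(ud)=u^2d^2$ together with centrality of $u^2$ and $d^2$, equals $\mathcal{O}(u^a(du)^bd^c)$ plus an error orbit sum in which at least one of the first or last exponents has been pushed to $\geq 2n$, placing the error in an already-handled case. The correct choice of factors depends on the parities of $b$ and $c$ and follows the bookkeeping of Cases 4 and 5 in the preceding $n$-even proof.

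The main obstacle is this third case: unlike the first two, neither centrality of $u^{2n},d^{2n}$ nor multiplication by $u^{2n}+d^{2n}$ directly produces $\mathcal{O}(u^a(du)^bd^c)$ when both $a$ and $c$ are $<2n$. One must exhibit, for each parity combination of $(b,c)$, a specific product of two lower-degree orbit sums whose expansion (mediated by $(du)(ud)=u^2d^2$ and the centrality of $u^2,d^2$) contains the target orbit sum as one summand and an error orbit sum of the required form as the other. This parallels the parity-based case split already executed in the $n$-even proof and is the main technical core of the argument.
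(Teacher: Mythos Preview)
Your overall architecture matches the paper's: strong induction on $k\ge 5$, with the central invariant $u^{2n}d^{2n}$ and the invariant $u^{2n}+d^{2n}$ as the main reduction tools, and the collapse $(du)(ud)=u^2d^2$ to migrate $(du)$--mass. Your Case~1 is exactly the paper's Case~1.

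However, your Case~2 contains a genuine gap. When $2n\le a<4n$ and $c<2n$, your identity
\[
(u^{2n}+d^{2n})\,\mathcal{O}(u^{a-2n}(du)^bd^c)=\mathcal{O}(u^a(du)^bd^c)+\mathcal{O}(u^{a-2n}(du)^bd^{c+2n})
\]
produces an error term $Y=\mathcal{O}(u^{a-2n}(du)^bd^{c+2n})$ with first exponent $a-2n<2n$ and last exponent $c+2n\in[2n,4n)$. Applying the ``analogous identity on the $d$-side'' to $Y$ means multiplying $\mathcal{O}(u^{a-2n}(du)^bd^{c})$ by $u^{2n}+d^{2n}$ --- but that is the \emph{same} product you already used, and it returns the \emph{same} relation $X+Y=(\text{generated})$. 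You have one linear relation between two unknown orbit sums, not two. Nor can $u^{2n}d^{2n}$ be factored out of either $X$ or $Y$ in this range, since neither has both outer exponents $\ge 2n$. So the parenthetical ``possibly combined with one use of $u^{2n}d^{2n}$'' does not rescue the argument.

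The paper avoids this trap by taking its Case~2 to be the stronger hypothesis ``$a\ge 4n$ (or $c\ge 4n$)'' rather than ``exactly one of $a,c$ is $\ge 2n$''; with that hypothesis the error $(a-2n,\,c+2n)$ lands squarely in Case~1. The residual range $2n\le a<4n$, $c<2n$ --- precisely your problematic subcase --- is then absorbed into a finer case analysis on $b$ (the paper's Cases 4--7). There the multipliers are orbit sums of degree~$n$, such as $u^n+d^n$, $\mathcal{O}(u(du)^{(n-1)/2})$, or $\mathcal{O}((du)^{(n-1)/2}d)$, rather than the degree-$2n$ element $u^{2n}+d^{2n}$. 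Because $n$ is odd these degree-$n$ elements are \emph{not} central, which is exactly why the parity of $b$ and $c$ enters the bookkeeping there. Your Case~3 sketch is in the right spirit, but as written its errors are only pushed into your Case~2, so until that gap is closed the argument does not terminate.
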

\begin{proof}
We show that all invariants can be generated by invariants of degrees $\leq 4n$. First of all, we notice that $u^{2n} v^{2n} \in A^G$ is an invariant of degree $4n$, because $g.u^{2n} d^{2n}=d^{2n} u^{2n}=u^{2n} d^{2n}$. We argue by induction, and show that for $a + 2b + c =kn$, with $k\geq 5$, the orbit sum $\mO(a,b,c)$ can be generated by invariants of smaller degree.\\
\noindent
\underline{\bf Case 1}: $a\geq 2n$ and $c\geq 2n$. 
Since $\mathcal{O}(a-2n,b,c-2n)\mathcal{O}({2n},0, {2n}) = 
2\mathcal{O}(a,b, c)$, by induction
$\mathcal{O}(a,b,c)$ can be generated by two invariants of lower degree.\\
\underline{\bf Case 2}: $a\geq 4n$ or $c\geq 4n$.  We may assume $a\geq 4n$, and then $$\mathcal{O}({a-2n}, b,{c})\mathcal{O}({2n},0,0) = 
\mathcal{O}(a,b,c)+ \mathcal{O}({a-2n}, b, {c+2n}).$$ The second invariant on the right-hand side belongs to Case 1 because $(a-2n) \geq 2n$ and $(c+2n) \geq 2n$. \\
\underline{\bf Case 3}: $b\geq 2n$. Then $\mathcal{O}(a,{b-n},{c})\mathcal{O}(0, {n}, 0) = \mathcal{O}(a, b,  c)+ \mathcal{O}({a+2n},{b-2n}, {c+2n}).$ 
The second invariant on the right-hand side belongs to Case 1 because $(a+2n) \geq 2n$ and $(c+2n) \geq 2n$.\\
\underline{\bf Case 4}: $b=0$.  We consider $\mathcal{O}(a, 0, c)$, where $a+c=kn \geq 5n$. Let $a \geq c$. Then we have the following cases.\\ 
\underline{\bf Case 4.1}: If $c \geq 2n$, the case falls into Case 1.\\
\underline{\bf Case 4.2}: If $c \leq n$, the case falls into Case 2 since $a \geq 4n$.\\
\underline{\bf Case 4.3}: If $n < c < 2n$, where $a \geq 3n$, we have:
\[
\mathcal{O}({a-n},0,c)\mathcal{O}(n,0,0) = \mathcal{O}(a, 0, c)+ \mathcal{O}({a-n},0,{c+n}) \]
if $c$ is even, while if $c$ is odd we have
\[
\mathcal{O}({a-n+1},0, {c-1}) \mathcal{O}({n-1},0,1) = \mathcal{O}(a,0, c)+\lambda\mathcal{O}({a-n+2}, 0, {c+n-2}).
\]
In both cases, the second invariant on the right-hand side belongs to Case 1 because $c \geq n+1$ and oth $n$ and $c$ are odd..\\
\underline{\bf Case 5}: $0<b<\frac{n}{2}$ then  $a+c>4n$, so that one cannot have both $a < 2n$ and $c < 2n$.  Without loss of generality we assume that $a >2n$ and $c < 2n$, and we write $a = 2n +p$ for $0<p <2n$ and $c = 2n-q$ for $0 < q < 2n$. Since $a + c > 4n$ we have $p-q>0$.  We consider two cases.\\
\underline{\bf Case 5.1}: $a=2n+p>3n$. Then we can assume $a < 4n$ by Case 1, and  then $n\leq c <2n$. Therefore 
$$\mathcal{O}({2n+p},b,{n-q})\mathcal{O}(0,0,n) =\mathcal{O}({2n+p},b,{2n-q}) +$$
\[
\begin{cases}
\mathcal{O}({3n+p-1},{b+1}, {n-q-1})\quad\mathrm{if}\;q\;\mathrm{even}\\
\mathcal{O}({3n+p+1},{b-1},{n-q+1}) \quad\mathrm{if}\;q\;\mathrm{odd}.\\
\end{cases}
\]
Since $3n+p+1 > 3n+p-1 = 2n + p + n-1> 3n +n-1 = 4n-1$, and both cases fall into Case 2.\\
\underline{\bf Case 5.2}: $a = 2n+p \leq 3n$. Then $(2b+2n-q) \geq 2n$. Therefore if $q$ is even:
$$
\mathcal{O}({2n+p},{b-\frac{q}{2}},0) \mathcal{O}(0,\frac{q}{2},{2n-q})= \mathcal{O}(a,b,c) +\lambda^{-\frac{q}{2}}\mathcal{O}(u^{2n+p} (du)^{b-\frac{q}{2}} (ud)^\frac{q}{2} u^{2n-q}).$$
If $q$ is odd:
$\mathcal{O}({2n+p},{b-\frac{q+1}{2}},1)\mathcal{O}(1, \frac{q-1}{2},{2n-q})=$$$
\mathcal{O}(a,b,c)+ \lambda^{-(q+1)/2}\mathcal{O}(u^{2n+p} (du)^{b-\frac{q+1}{2}} (ud)^\frac{q-1}{2} u^{2n-q} d^2).$$
Without complete simplification, it is clear that the degree of $u$ in the second
invariant on the right-hand side is always greater or equal to $(2n+p+2n-q)>4n$. This leads the invariant to fall into Case 2, completing Case 5.\\
\underline{\bf Case 6}: $\frac{n}{2}<b<n$.
 ~If $c$ is even we have
$\mathcal{O}({a},{b-\frac{n+1}{2}}, {c+1})\mathcal{O}(1,\frac{n-1}{2},0)$ $$ = \mathcal{O}(a,b,c) +\lambda^{(n-1)/2}\mathcal{O}({a+2b-n},{n-b-1}, {c+2b-n+2}).$$ 
If $c$ is odd we have:
$\mathcal{O}({a},{b-\frac{n-1}{2}}, {c-1})\mathcal{O}(0,\frac{n-1}{2}1)$ $$ = \mathcal{O}(a,b,c)+\lambda^{(n+1)/2}\mathcal{O}({a+2b-n+2},{n-b-1},{c+2b-n}).$$ 
Note that in both cases above, in the second invariant on the right-hand side, the degree of $du$, which is $n-b-1$, is smaller than $\frac{n}{2}$. so this case follows from Case 5.\\
\underline{\bf Case 7}: $n\leq b<2n$. Then in both the cases $c$ even and $c$ odd, we have:
$$\mathcal{O}(a,{b-n},{c})\mathcal{O}(0,{n},0) = \mathcal{O}(a,b,c)+\mathcal{O}({a+2b-2n+1},{2n-b-1}, {c+2b-2n+1}).$$
Note that the degree of $du$ is now $2n-b-1$, and $0\leq (2n-b-1) < n$. Thus the case falls into the union set of Cases 4, 5, and 6.
\end{proof}

Having shown we can generate the invariants using invariants of degree $\leq 4n$ we
now improve this bound, and together with Proposition \ref{dulowerboundnodd}, we compute $\beta(g)$.
\begin{theorem} \label{downupnodd} If $n \equiv 1 \mod{2}$ then $\beta(g) = 3n$.
\end{theorem}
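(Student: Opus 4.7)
The lower bound $\beta(g) \geq 3n$ is already provided by Proposition \ref{dulowerboundnodd}, so the remaining work is to establish the upper bound $\beta(g) \leq 3n$. Proposition \ref{4nbound} shows that every invariant of degree $\geq 5n$ lies in the subalgebra generated by invariants of degree $\leq 4n$, and I would therefore focus on the base case: prove that every orbit sum $\mO(a,b,c)$ of degree $a+2b+c = 4n$ can be written as a polynomial in orbit sums of degree $\leq 3n$.

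First I would re-run the same case analysis as in Proposition \ref{4nbound}, replacing each appeal to the degree-$4n$ invariant $u^{2n}d^{2n}$ by a product of two invariants of degree $\leq 3n$ (for instance a degree-$n$ orbit sum times a degree-$3n$ orbit sum, or a product of two degree-$2n$ orbit sums). For most triples $(a,b,c)$ this should cause no difficulty: the relevant product expands as $\mO(a,b,c)$ plus one or more orbit sums that, after conversion between the standard form $u^{a}(du)^{b}d^{c}$ and the anti-standard form produced by $g$, fall under a previously handled case (with a secondary induction on $\min(a,c)$ or on $b$).

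The genuinely new step is to handle the small finite set of ``exceptional'' degree-$4n$ orbit sums not produced by the generic products above, most prominently $\mO(4n,0,0) = u^{4n}+d^{4n}$ and $\mO(2n,0,2n) = 2u^{2n}d^{2n}$. The starting identity
\begin{align*}
(u^{2n}+d^{2n})^{2} = \mO(4n,0,0) + \mO(2n,0,2n),
\end{align*}
valid because $u^{2n}$ and $d^{2n}$ are central, reduces the task to generating $\mO(4n,0,0)$ (and a few adjacent orbit sums). I would then produce additional relations by computing the two products $(u^{n}+d^{n})(u^{3n}+d^{3n})$ and $(u^{3n}+d^{3n})(u^{n}+d^{n})$---the noncommutativity of $A(0,1)$ ensures that the two orderings give linearly independent relations among $\mO(4n,0,0)$, $\mO(n,0,3n)$, and $\mO(3n,0,n)$---and by forming analogous products with one factor replaced by, say, $\mO(n-1,0,1)$ or $\mO(3n-1,0,1)$. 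My expectation is that sufficiently many such products yield a nondegenerate square linear system from which each exceptional orbit sum can be solved in terms of invariants of degree $\leq 3n$.

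The main obstacle I anticipate is the bookkeeping required to convert each product back into the standard orbit-sum basis: this uses centrality of the even powers $u^{2k},d^{2k}$ together with the observation that odd powers contribute correction terms in which $(ud)$ and $(du)$ appear with different coefficients, and the appropriate powers of $\lambda$ must be tracked throughout. Once these conversions are tabulated and assembled into the linear system, the verification of its nondegeneracy---and hence of the theorem---should reduce to a routine computation analogous to those carried out in Proposition \ref{dulowerboundnodd}.
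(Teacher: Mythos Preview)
Your proposal is correct and follows essentially the same strategy as the paper: use Proposition~\ref{dulowerboundnodd} for the lower bound, invoke Proposition~\ref{4nbound} to reduce to showing that every degree-$4n$ orbit sum is generated in degrees $\leq 3n$, obtain the exceptional orbit sums $\mO(4n,0,0)$ and $\mO(2n,0,2n)$ by solving a small linear system built from products such as $\mO(n,0,0)\mO(3n,0,0)$ and $\mO(2n,0,0)^2$, and then treat the remaining degree-$4n$ orbit sums by expressing them as products of lower-degree ones plus already-handled terms. The paper's execution is slightly more streamlined than what you outline: it establishes the exceptional cases \emph{first} (so that $u^{2n}d^{2n}=\tfrac12\mO(2n,0,2n)$ is available throughout), and it replaces most of the Proposition~\ref{4nbound}-style case analysis by the single observation that $\mO(2n-2i-j,i,j)^2 = \lambda^{i+j}\mO(2n,0,2n) + \mO(a,b,c)$ with $b\equiv c\pmod 2$, so only the opposite-parity orbit sums need the subcase breakdown you anticipate.
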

\begin{proof}
It suffices to show that an invariant $(4n-2b-c, b, c)$  of degree $4n$ can be generated by invariants
of degree $\leq 3n$.  We first compute some special cases.    We have the following system of equations
$$\mathcal{O}(2n,0,0)^2 = \mathcal{O}(4n,0,0) + \mathcal{O}(2n,0,2n)$$
$$\mathcal{O}(n, 0, 0)\mathcal{O}(3n,0,0) = \mathcal{O}(4n,0,0) + \mathcal{O}(3n-1,1,n-1)$$
$$\mathcal{O}(2n,0,n) \mathcal{O}(n,0,0) = \mathcal{O}(3n-1,1,n-1) + \mathcal{O}(2n,0,0)$$
that can be solved, so in particular we can generate $\mathcal{O}(4n, 0, 0)$ and $\mathcal{O}(2n, 0, 2n)$
using invariants of lower degree.

We consider the orbit sums $\mO(a, b, c)$. where $a +2b+c = 4n$ and show they
can be generated by orbit sums of smaller degree.\\
\underline{\bf Case 1}: $b$ and $c$ have the same parity.
Consider the equation:
\begin{align*} \mathcal{O}(2n-2i-j,i,j)^2 & =  \lambda^{i+j}\mathcal{O}(2n,0,2n) \\&  + \begin{cases}
\mathcal{O}(4n-4i-2j,2i,2j) \quad \text{ if } j \text{ even}\\
\mathcal{O}(4n-4i-2j-1, 2i+1, 2j-1) \quad \text{ if } j \text{ odd}.\\
\end{cases} 
\end{align*}
Hence we can generate all invariants of degree $4n$ by lower degree invariants when, in each of the two summands, the $du$ and $d$ exponents have the same parity.\\
\underline{\bf Case 2}: $b$ and $c$ have  opposite parity.  As in the proof of  Proposition \ref{4nbound} we will consider cases according to the value of $b$.\\
    \underline{\bf Case 2.1}: 
$b=0$ and $c$ is odd. We can assume $c<2n$, then
\[
\mO(3n-c+1,0,c-1)\mO(n-1,0,1)=\mO(4n-c,0,c)+\lambda\mO(3n-c+2,0,n+c-2),
\]
since $n+c-2$ is even the second summand on the right-hand side of the equation is covered by Case 1. Hence
we can generate orbit sums of the form $\mathcal{O}(4n-c, 0, c)$ by lower degree invariants.\\
\underline{\bf Case 2.2}: $0 < b< n/2$. Without loss of generality we can assume $a < 2n$ and $2b + c = 4n-a > 2n$.  Then
$b+c > 2n-b > 2n-n/2 > n$.  When $b$ is  odd and $c$ is even we have:
$$\mathcal{O}(a, (b-1)/2,b+c+1-n)\mathcal{O}(1,(b-1)/2,n-b)= $$
$$ \mathcal{O}(a,b,c) + \lambda^{-(b+1)/2}\mathcal{O}(a+n-1, 0 , c+2b-n+1),$$
while if  $b$ is even and $c$ is odd we have:
$$\mathcal{O}(a,b/2,c+b-n)\mathcal{O}(0,b/2, n-b) =$$
$$\mathcal{O}(a,b,c) + \lambda^{-(b+1)/2}\mathcal{O}(a + n,0, c+ 2b-n).$$
In either case, the second summand on the right-hand side of the equation is covered by the Case 2.1.\\
\underline{\bf Case 2.3}: $n/2 < b <n$. This case follows   from the equations in Proposition \ref{4nbound} Case 6, for when $b$ is odd and $c$ is even, the second summand on the right-hand side of the equation is $\mathcal{O}(a+2b-n, n-b-1, c+2b-n+2)$, and the second and third components are both odd. Similarly when $b$ is even and $c$ is odd.
 In both cases the second summand is generated by lower degree invariants by Case 1.\\
 \underline{\bf Case 2.4}: $n \leq b <2n$. This case follows from the equation in Proposition \ref{4nbound} Case 7, for the second summand on the right-hand side of the equation is
 $\mathcal{O}(a+2b-2n+1, 2n-b-1, c+2b-2n +1)$ and $0 \leq 2n-b-1 <n$ with the second
 and third components of opposite parity, so generated by lower degree invariants
 by the union of Cases 2.1, 2.2, and 2.3.
\end{proof}

\begin{question}
It would be interesting to find $\beta(g)$
for $A(0,-1)$ and $A(2,-1)$. For $n=1$, one 
 can show that when $g$ acts on $A(0,-1)$ the 
 invariant $(du)^2 + (ud)^2$ of  degree $4$ is needed to generate $A^G$, and computer calculations suggest that $\beta(g) = 4$; for $n$ odd is $\beta(g) = 4n$?  For $n=1$ and $g$ acting on $A(2,-1)$ computer calculations suggest $\beta(g) = 2$. For noncommutative algebras how is $\beta(g)$ related to the order of the group?  
\end{question}

\bibliography{invariantproject}
\bibliographystyle{amsplain}




\end{document}